\title{On robustly convergent and efficient iterative methods for anisotropic radiative transfer
\thanks{Submitted to the editors \today.}}
\author{J\"urgen D\"olz%
\thanks{Institute for Numerical Simulation, University of Bonn, Friedrich-Hirzebruch-Allee 7, 53115 Bonn, Germany.
\email{doelz@ins.uni-bonn.de}}
\and Olena Palii\thanks{Department of Applied Mathematics, University of Twente,
P.O. Box 217, 7500 AE Enschede, The Netherlands.
\email{o.palii@utwente.nl}}
\and Matthias Schlottbom%
\thanks{Department of Applied Mathematics, University of Twente,
P.O. Box 217, 7500 AE Enschede, The Netherlands.
\email{m.schlottbom@utwente.nl}}
}
\newcommand{\TheTitle}{Iterative methods for anisotropic RTE}
\newcommand{\TheAuthors}{J. D\"olz, O. Palii, M. Schlottbom}
\headers{\TheTitle}{\TheAuthors}
\def\s{s}
\def\r{r}
\def\n{n}
\def\sn{s \cdot \n}
\def\sgrad{s \cdot \nabla_\r}
\def\u{u}
\def\K{K}
\def\H{H}
\def\q{q}
\def\A{\mathcal{A}}
\def\E{\mathcal{E}}
\def\H{\mathcal{H}}
\def\I{\mathcal{I}}
\def\K{\mathcal{K}}
\def\M{\mathcal{M}}
\def\cO{\mathcal{O}}
\def\R{\mathcal{R}}
\def\S{\mathcal{S}}
\def\dD{\partial D}
\def\P{\mathcal{P}}
\def\T{\mathcal{T}}
\def\SS{\mathbb{S}}
\def\XX{\mathbb{X}}
\def\RR{\mathbb{R}}
\def\NN{\mathbb{N}}
\def\PP{\mathbb{P}}
\def\WW{\mathbb{W}}
\def\VV{\mathbb{V}}
\def\ThR{\mathcal{T}_h^R}
\def\ThS{\mathcal{T}_h^S}
\def\Id{\mathcal{I}}
\def\ue{u^{\!+}}
\def\uo{u^{\!-}}
\def\bA{\mathbf{A}}
\def\bB{\mathbf{B}}
\def\bC{\mathbf{C}}
\def\bE{\mathbf{E}}
\def\bI{\mathbf{I}}
\def\bK{\mathbf{K}}
\def\bM{\mathbf{M}}
\def\bP{\mathbf{P}}
\def\bR{\mathbf{R}}
\def\bU{\mathbf{U}}
\def\bW{\mathbf{W}}
\def\bX{\mathbf{X}}
\def\bb{\mathbf{b}}
\def\bq{\mathbf{q}}
\def\br{\mathbf{r}}
\def\bs{\mathbf{s}}
\def\bu{\mathbf{u}}
\def\bv{\mathbf{v}}
\def\bw{\mathbf{w}}
\def\bx{\mathbf{x}}
\def\by{\mathbf{y}}
\def\bz{\mathbf{z}}
\def\bAi{\boldsymbol{\mathsf{A}}}
\def\bIdSpheree{\boldsymbol{\mathsf{I}}\!^{+}}
\def\bIdSphereo{\boldsymbol{\mathsf{I}}\!^{-}}
\def\bMSpheree{\boldsymbol{\mathsf{M}}\!^{+}}
\def\bMSphereo{\boldsymbol{\mathsf{M}}\!^{-}}
\def\bSe{\boldsymbol{\mathsf{S}}\!^{+}}
\def\bSo{\boldsymbol{\mathsf{S}}\!^{-}}
\def\bLambda{\boldsymbol{\mathsf{\Lambda}}}
\def\bWSphereo{\boldsymbol{\mathsf{W}}\!^{-}}
\def\bWSpheree{\boldsymbol{\mathsf{W}}\!^{+}}
\def\bD{\boldsymbol{\mathfrak{D}}}
\def\bMse{\boldsymbol{\mathfrak{M}}\!^{+}}
\def\bMso{\boldsymbol{\mathfrak{M}}\!^{-}}
\def\bIdso{\boldsymbol{\mathfrak{I}}\!^{-}}
\def\bIdse{\boldsymbol{\mathfrak{I}}\!^{+}}
\def\bRs{\boldsymbol{\mathfrak{R}}}
\def\bIde{\bI^{\!+}}
\def\bMe{\bM^{\!+}}
\def\bMo{\bM^{\!-}}
\def\bKe{\bK^{\!+}}
\def\bKo{\bK^{\!-}}
\def\bWe{\bW^{\!+}}
\def\ble{\bq^{\!+}}
\def\blo{\bq^{\!-}}
\def\bue{\bu^{\!+}}
\def\buo{\bu^{\!-}}
\def\bUe{\bU^{\!+}}
\def\bUo{\bU^{\!-}}
\def\st{\sigma_t}
\def\sa{\sigma_a}
\def\ss{\sigma_s}
\def\bfx{\mathbf{x}}
\def\bfy{\mathbf{y}}
\newtheorem{remark}[theorem]{Remark}
\newcommand{\tnorm}[1]{{\left\vert\kern-0.25ex\left\vert\kern-0.25ex\left\vert#1\right\vert\kern-0.25ex\right\vert\kern-0.25ex\right\vert}}
\begin{document}
\maketitle

\begin{abstract}
This paper considers the iterative solution of linear systems arising from discretization of the anisotropic radiative transfer equation with discontinuous elements on the sphere. 
In order to achieve robust convergence behavior in the discretization parameters and the physical parameters we develop preconditioned Richardson iterations in Hilbert spaces. We prove convergence of the resulting scheme. The preconditioner is constructed in two steps. The first step borrows ideas from matrix splittings and ensures mesh independence. The second step uses a subspace correction technique to reduce the influence of the optical parameters. The correction spaces are build from low-order spherical harmonics approximations generalizing well-known diffusion approximations.
We discuss in detail the efficient implementation and application of the discrete operators. In particular, for the considered discontinuous spherical elements, the scattering operator becomes dense and we show that $\mathcal{H}$- or $\mathcal{H}^2$-matrix compression can be applied in a black-box fashion to obtain almost linear or linear complexity when applying the corresponding approximations. 
The effectiveness of the proposed method is shown in numerical examples.
\end{abstract}

\begin{keywords}
anisotropic radiative transfer, iterative solution, preconditioning, compression
\end{keywords}

\begin{AMS}
65F08 
65F10 
65N22 
65N30 
65N45 
\end{AMS}
 
\section{Introduction}
Radiative transfer models describe the streaming, absorption, and scattering of radiation waves propagating through a turbid medium occupying a bounded convex domain $R\subset\RR^d$, and they arise in a variety of applications, e.g., neutron transport~\cite{CaseZweifel67,LewisMiller84}, heat transfer~\cite{Modest}, climate sciences~\cite{evans1998spherical}, geosciences~\cite{Meng_2017} or medical imaging and treatment~\cite{Ahmedov_2016,Arridge_2009,Tarvainen_2017}. 
 The underlying physical model can be described by the anisotropic radiative transfer equation,
\begin{align}
\sgrad \u(\s,\r) + \sigma_t(\r) \u(\s,\r) &= \sigma_s(\r)\int_S k(\s\cdot \s')\u(\s',\r)d\s' + \q(\s,\r).\label{eq:rte1} 
\end{align}
The specific intensity $\u=\u(\s,\r)$ depends on the position $\r\in R$ and the direction of propagation described by a unit vector $\s\in S$, i.e., we assume a constant speed of propagation.
The medium is characterized by the total attenuation coefficient $\sigma_t=\sigma_a+\sigma_s$, where $\sa$ and $\ss$ denote the absorption and scattering rates, respectively. The scattering phase function $k$ relates pre- and post-collisional directions, and we consider exemplary the Henyey-Greenstein phase function
\begin{align}\label{eq:HG-kernel1}
    k(s\cdot s')=\frac{1}{4\pi}\frac{1-g^2}{{[1-2g(s\cdot s')+g^2]}^{3/2}},
\end{align}
with anisotropy factor $g$. For $g=0$, we speak about isotropic scattering, and for $g$ close to one, we say that the scattering is (highly) forward peaked. For simplicity, we assume $0\leq g<1$ in the following. The case $-1<g\leq 0$ is similar.
Internal sources of radiation are modeled by the function $\q$.
Introducing the outer unit normal vector field $\n(\r)$ on $\partial R$, the boundary condition is modeled by
\begin{align}
    \u(\s,\r) = g(\s,\r)  \quad \text{for } (\s,\r)\in S\times\partial R \text{ such that } s\cdot \n(\r)<0. \label{eq:rte2}
\end{align}
In this paper we consider the iterative solution of the linear systems arising from the discretization of the anisotropic radiative transfer equations \cref{eq:rte1}--\cref{eq:rte2} by preconditioned Richardson iterations. 
We are particularly interested in robustly convergent methods for multiple physical regimes that, at the same time, can embody ballistic regimes $\ss\ll 1$ and diffusive regimes, i.e., $\ss\gg 1$ and $\sa>0$, and highly forward peaked scattering, as it occurs for example in medical imaging applications~\cite{Gonzalez_2008}.
Due to the size of the arising systems of linear equations, their numerical solution is challenging, and a variety of methods were developed as briefly summarized next. 
\subsection{Related work}
Since for realistic problems analytical solutions are not available, numerical approximations are required.
Common discretization methods can be classified into two main approaches based on their semidiscretization in $\s$. The spherical harmonics method \cite{AydinOliveiraGoddard04,egger2019,LewisMiller84} approximates the solution $\u$ by a truncated series of spherical harmonics, which allows for spectral convergence for smooth solutions. For non-smooth solutions, which is the generic situation, local approximations in $\s$ can be advantageous, which is achieved, e.g., by discrete ordinates methods \cite{Han2010,LewisMiller84,Sun2020,Wang:2018,Tano2021}, continuous Galerkin methods \cite{becker2010finite}, the discontinuous Galerkin (DG) method \cite{GuermondKanschatRagusa2014,kophazi2015space,palii2020convergent},
iteratively refined piecewise polynomial approximations \cite{Dahmen_2020}, or hybrid methods \cite{Crockatt2020,Heningburg2020}.

A common step in the solution of the linear systems resulting from local approximations in $\s$ is to split the discrete system into a transport part and a  scattering part. While the inversion of transport is usually straight-forward, scattering introduces a dense coupling in $\s$. The corresponding Richardson iteration resulting from this splitting is called the source iteration \cite{AdamsLarsen02,MarchukLebedev86}, and it converges linearly with a rate $c=\|\ss/\st\|_\infty$. For scattering dominated problems, such as the biomedical applications mentioned above, we have $c\approx 1$ and the convergence of the source iteration becomes too slow in such applications. Acceleration of the source iteration can be achieved by preconditioning, which usually employs the diffusion approximation to \eqref{eq:rte1}--\eqref{eq:rte2} \cite{AdamsLarsen02}, and the resulting scheme is then called diffusion synthetic accelerated (DSA) source iteration \cite{AdamsLarsen02}. Although this approach is well motivated by asymptotic analysis, it faces several issues, such as, a proper generalization to multi-dimensional problems with anisotropy, strong variations in the optical parameters, or the use of unstructured and curved meshes, see \cite{AdamsLarsen02}.

Effective DSA schemes rely on consistent discretization of the corresponding diffusion approximation, see \cite{palii2020convergent,Warsa2002} for isotropic scattering, and in \cite{Ragusa2010} for two-dimensional problems with anisotropic scattering. The latter employs a modified interior penalty DG discretization for the corresponding diffusion approximation, which has also been used in \cite{Wang2010} where it is, however, found that their DSA scheme becomes less effective for highly heterogeneous optical parameters. A discrete analysis of DSA schemes for high-order DG discretizations on possibly curved meshes, which may complicate the inversion of the transport part, can be found in \cite{Haut2020}.
In the variational framework of \cite{palii2020convergent} consistency is automatically achieved by subspace correction instead of finding a consistent discretization of the diffusion approximation. This variational treatment allowed to prove convergence of the corresponding iteration and numerical results showed robust contraction rates, even in multi-dimensional calculations with heterogeneous optical parameters. 

The subspace correction approach can be related to multigrid schemes \cite{Xu_2002}, and we refer to \cite{Kanschat_2014,Lee2012,Shao2020} and the references there in the context of radiative transfer. Comparing to non-symmetric Krylov space methods, such as GMRES or BiCGStab, see \cite{AdamsLarsen02,badri2019preconditioned,warsa2002krylov} and the references there, our approach is very memory effective and monotone convergence behavior is guaranteed. Moreover, in view of its good convergence rates, the considered preconditioned Richardson iteration is competitive to these multilevel and Krylov space methods.
It is the purpose of this paper to generalize the approach of \cite{palii2020convergent} to the anisotropic scattering case, which requires non-trivial extensions as outlined in the next section.

\subsection{Approach and contribution}\label{sec:contribution}
In this paper we focus on the construction of \emph{robustly and provably convergent efficient iterative schemes} for the radiative transfer equation with anisotropic scattering. To describe our approach, let us introduce the linear system that we need to solve, which stems from a mixed finite element discretization of \eqref{eq:rte1}--\eqref{eq:rte2} using discontinuous polynomials on the sphere \cite{egger2012mixed,palii2020convergent}, i.e., 
\begin{align}\label{eq:mixed_discrete_intro}
    \begin{bmatrix}\bR + \bMe & -\bA\!^{\intercal}\\ \bA & \bMo\end{bmatrix}\begin{bmatrix}\bue\\ \buo\end{bmatrix} = \begin{bmatrix} \bKe&\\ &\bKo \end{bmatrix}\begin{bmatrix}\bue\\ \buo\end{bmatrix} + \begin{bmatrix}\ble\\ \blo\end{bmatrix}.
\end{align}
Here, the superscripts in the equation refer to even (`$+$') and odd (`$-$') parts from the underlying discretization. The matrices $\bKe$ and $\bKo$ discretize scattering, while $\bR$ incorporates boundary conditions, $\bMe$ and $\bMo$ are mass matrices related to $\st$, and $\bA$ discretizes $\s\cdot\nabla_\r$, and their assembly can be done with standard FEM codes.
The even part solves the even-parity equations
\begin{align}\label{eq:ep_discrete_intro}
    \bE \bue = \bKe \bue + \bq,
\end{align}
i.e., the Schur complement of \eqref{eq:mixed_discrete_intro},
with symmetric positive definite matrix $\bE=\bA\!^{\intercal} (\bMo-\bKo)^{-1}\bA+\bMe+\bR$ and source term $\bq=\ble+\bA\!^{\intercal} (\bMo-\bKo)^{-1}\blo$. 
Once the even part $\bue$ is known, the odd part $\buo$ can be obtained from \cref{eq:mixed_discrete_intro}. 
The preconditioned Richardson iteration considered in this article then reads
\begin{align}\label{eq:richardson_intro}
    \bue_{n+1}= \big(\bI - \bP_2 \bP_1 (\bE-\bKe)\big)\bue_n + \bP_2\bP_1 \bq,
\end{align}
with preconditioners $\bP_1$ and $\bP_2$. Comparing to standard DSA source iterations, $\bP_1$ corresponds to a transport sweep, and a typical choice that renders the convergence behavior of \cref{eq:richardson_intro} independent of the discretization parameters is $\bP_1=\bE^{-1}$. More precisely, we show that this choice of $\bP_1$ yields a contraction rate of $c=\|\ss/\st\|_\infty$.
The second preconditioner $\bP_2$ aims to improve the convergence behavior in diffusive regimes, $c\approx 1$.
In the spirit of \cite{palii2020convergent}, we construct $\bP_2$ via Galerkin projection onto suitable subspaces, which guarantees monotone convergence of \cref{eq:richardson_intro}.
The construction of suitable subspaces that give good error reduction is motivated by the observation that error modes that are damped hardly by $\bI-\bP_1(\bE-\bKe)$ can be approximated well by spherical harmonics of low degree. While for the isotropic case $g=0$, spherical harmonics of degree zero, i.e., constants in angle, are sufficient for obtaining good convergence rates, we show that higher order spherical harmonics should be used for anisotropic scattering. To preserve consistency, we replace higher order spherical harmonics, which are the eigenfunctions of the integral operator in \cref{eq:rte1}, by discrete eigenfunctions of $\bKe$.

The efficiency of the proposed iterative scheme hinges on the ability to efficiently implement and apply the arising operators.
While for $g=0$, $\bKo=0$, and $\bKe$ can be realized via fast Fourier transformation, and $\bE$ is block-diagonal with sparse blocks allowing for an efficient application of $\bE$, the situation is more involved for $g>0$.
We show that $\bKe$ and $\bKo$ can be applied efficiently by exploiting their Kronecker structure between a sparse matrix and a dense matrix which turns out to be efficiently applicable by using $\mathcal{H}$- or $\mathcal{H}^2$-matrix approximations independently of $g$. 
As we show the practical implementation of $\mathcal{H}$- or $\mathcal{H}^2$-matrices can be done by standard libraries, such as \textsc{H2LIB} \cite{Boe} or \textsc{BEMBEL} \cite{DHK+2020}. This in combination with standard FEM assembly routines for the other matrices ensures robustness and maintainability of the code.

Since $\bA$, $\bMe$, and $\bR$ are sparse and block diagonal, the main bottleneck in the application of $\bE$ is the application of $(\bMo-\bKo)^{-1}$. Based on the tensor structure of $\bKo$ and its spectral properties, we derive a preconditioner such that $(\bMo-\bKo)^{-1}$ can be applied robustly in $g$ in only a few iterations. 
Thus, we can apply $\bE$ in almost linear complexity.
Efficiency of \cref{eq:richardson_intro} is further increased by realizing $\bP_1=\bE^{-1}\approx \bP_1^l$ inexactly by employing a small, fixed number of $l$ steps of an inner iterative scheme. We show that the condition number of $\bP_1^l\bE$ is $O((cg)^{-l})$ which is robust in the limit $c\to 1$.
In contrast, we note that the condition number of $\bP_1^l(\bE-\bKe)$ is $O((1-c)^{-1})$, i.e., a straight-forward iterative solution of the even-parity equations using a black-box solver, such as preconditioned conjugate gradients, is in general not robust for $c\to 1$.

Summarizing, each step of our iteration \cref{eq:richardson_intro} can be performed very efficiently. The iteration is provably convergent and numerical results show that the contraction rates are robust for $c\to 1$. The result is a highly efficient numerical scheme for the solution of the even parity equations \cref{eq:ep_discrete_intro} and, thus, also for the overall system \cref{eq:mixed_discrete_intro}.

\subsection{Outline}
The structure of the paper is as follows: In \cref{sec:preliminaries} we recall the variational formulation that builds the basis of our numerical scheme and establish some spectral equivalences for the scattering operator which are key to the construction of our preconditioners. In \cref{sec:iteration_ep} we present iterative schemes for the even-parity equations of radiative transfer in Hilbert space, which, after discretization in \cref{sec:galerkin}, result in the schemes described in \cref{sec:contribution}. Details of the implementation and its complexity are described in \cref{sec:richardson_discrete}.
Numerical studies of the performance of the proposed methods and report on the results are presented in \cref{sec:numerics}. The paper closes with a discussion in \cref{sec:discussion}.

\section{Preliminaries}\label{sec:preliminaries}
In the following we recall the relevant functional analytic framework, state the corresponding variational formulation of the radiative transfer problem \eqref{eq:rte1}--\eqref{eq:rte2} and provide some analytical results about the spectrum of the scattering operator which we will later use for the construction of our preconditioners.

\subsection{Function spaces}
By $L^2(M)$ we denote the usual Hilbert space of square integrable functions on a manifold $M$, and denote $(u,w)_M=\int_{M} uw\, dM$ the corresponding inner product and $\|u\|_{L^2(M)}$ the induced norm. For $M=D=S\times R$, we write $\VV=L^2(D)$ and $(u,w)=(u,w)_D$. Functions $w\in \VV$ with weak derivative $\sgrad w\in \VV$ have a well-defined trace \cite{ManResSta00}. We restrict the natural trace space \cite{ManResSta00}, and consider the weighted Hilbert space $L^2(\dD_\pm;|\sn|)$ of measurable functions $w$ on $\dD_\pm=\{(\s,\r)\in S\times\partial R: \pm s\cdot n(r)>0\}$ with $|\sn|^{1/2} w\in L^2(\dD_\pm)$.
For the weak formulation of \cref{eq:rte1}--\cref{eq:rte2} we use the Hilbert space
\begin{align*}
    \WW=\{w\in L^2(D): \sgrad w\in L^2(D),\,\, w_{\mid\dD_-}\in L^2(\dD_-;|\sn|)\},
\end{align*}
with corresponding norm $\|w\|_\WW^2=\|\sgrad w\|_{L^2(D)}^2+\|w\|_{L^2(D)}^2+\|w\|_{L^2(\dD_-;|\sn|)}^2$.
\subsection{Assumptions on the optical parameters and data}\label{sec:parameters}
The data terms are assumed to satisfy $q \in L^2(D)$ and $g\in L^2(\dD_-;|\sn|)$. Absorption and scattering rates are non-negative and essentially bounded functions $\sa,\ss\in L^\infty(R)$. We assume that the medium occupied by $R$ is absorbing, i.e., that there exists a constant $\gamma>0$ such that $\sa(\r)\geq \gamma$ for a.e.\ $\r\in R$. Thus, the ratio between the scattering rate and the total attenuation rate $\st=\sa+\ss$ is strictly less than one, $c=\|\ss/\st\|_\infty<1$.
\subsection{Even-odd splitting}
The space $\VV=\VV^+\oplus \VV^-$ allows for an orthogonal decomposition into even and odd functions of the variable $s\in S$. The even part $\ue$ and odd part $\uo$ of a function $u\in\VV$ is defined a.e.\ by $u^{\pm}(\s,\r)=\frac{1}{2}(u(\s,\r)\pm u(-\s,\r))$.
Similarly, we denote $\WW^{\pm}$ the corresponding subspaces of functions $u\in\WW$ with $u\in \VV^\pm$.
\subsection{Operator formulation of the radiative transfer equation}
The weak formulation of \cref{eq:rte1}--\cref{eq:rte2} presented in \cite{egger2012mixed} can be stated concisely using suitable operators and we refer to \cite{egger2012mixed} for proofs of the corresponding mapping properties.
Let $\ue,w^+\in\WW^+$ and $\uo\in\VV^-$.
The transport operator $\A:\WW^+\to\VV^-$ is defined by $\A \ue=\sgrad \ue$. Identifying the dual $\VV'$ of $\VV$ with $\VV$, the dual transport operator $\A':\VV^-\to (\WW^+)'$ is defined by $\langle \A' \uo,w^+\rangle=(\A w^+, \uo)$. Boundary terms are handled by the operator $\R:\WW^+\to (\WW^+)'$ defined by $\langle \R \ue,w^+\rangle=(|\sn|\ue,w^+)_{\dD}$.
Scattering is described by the operator $\S:L^2(S)\to L^2(S)$ defined by
\begin{align*}
    (\S u)(\s) = \int_{S} k(\s\cdot\s') u(\s')d\s',
\end{align*}
where $k$ is the phase function defined in \cref{eq:HG-kernel1}. In slight abuse of notation, we also denote the trivial extension of $\S$ to an operator $L^2(D)\to L^2(D)$ by $\S$. We recall that $\S$ maps even to even and odd to odd functions \cite[Lemma~2.6]{egger2012mixed}, and so does $\K:\VV\to\VV$ defined by $\K u = \ss\S u$. We denote by $\K$ also its restrictions to $\VV^\pm$ and $\WW^+$, respectively.
The spherical harmonics $\{H^l_m: l\in\NN_0, -l\leq m\leq l\}$ form a complete orthogonal system for $L^2(S)$, and we assume the normalization $\|H^l_m\|_{L^2(S)}=1$. Furthermore, $H^{l}_m$ is an eigenfunction of $\S$ with eigenvalue $g^l$, i.e.,
\begin{align}\label{eq:eigS}
\S H^l_m = g^l H^l_m,
\end{align}
and $H^l_m\in\VV^+$ if $l$ is an even number and $H^l_m\in \VV^-$ if $l$ is an odd number.
Attenuation is described by the multiplication operator $\M:\VV\to\VV$ defined by $\M u=\st u$.
Introducing the functionals $\ell^+ \in (\WW^+)'$, $\ell^+(w^+)=(q,w^+) +2 (|\sn| g, w^+)_{\dD_-}$ and $\ell^-\in (\VV^-)'$, $\ell^-(w^-)=(q,w^-)$, the operator formulation of the radiative transfer equation \cref{eq:rte1}--\cref{eq:rte2} is \cite{egger2012mixed}: 
Find $(\ue,\uo)\in\WW^+\times\VV^-$ such that
\begin{align}
    \R \ue - \A' \uo + \M \ue &= \K \ue +\ell^+ \qquad\text{ in } (\WW^+)', \label{eq:op1}\\
    \A \ue + \M \uo &= \K \uo + \ell^- \qquad \text{ in } \VV^-.\label{eq:op2}
\end{align}
\subsection{Well-posedness}
In the situation of \cref{sec:parameters}, there exists a unique solution $(\ue,\uo)\in\WW^+\times\VV^-$ of \cref{eq:op1,eq:op2} satisfying
\begin{align*}
    \|\ue\|_{\WW} + \|\uo\|_{\VV} \leq C (\|q\|_{L^2(D)}+ \|g\|_{L^2(\dD_-;|\sn|)}),
\end{align*}
with a constant $C$ depending only on $\gamma$ and $\|\st\|_\infty$ \cite{egger2012mixed}.
Notice that this well-posedness result remains true even if $\sa$ and $\ss$ are allowed to vanish \cite{EggerSchlottbom14}.
As shown in \cite[Theorem 4.1]{egger2012mixed} it holds that $\uo\in\WW^-$ and $\ue+\uo\in\WW$ satisfies \cref{eq:rte1} a.e.\ in $D$ and \cref{eq:rte2} holds in $L^2(\dD_-;|\sn|)$.
\subsection{Even-parity formulation}\label{sec:ep}
As in \cite{egger2012mixed}, it follows from \cref{eq:eigS} that
\begin{align}\label{eq:equivalence_low}
    \inf_{r\in R} (\sa+(1-g)\ss) \|v^-\|_\VV^2 \leq \|v^-\|_{\M-\K}^2\leq \|\st\|_\infty \|v^-\|^2_\VV\quad\text{for } v^-\in\VV^-,
\end{align}
where we write $\|w\|_{\mathcal{Q}}^2=(\mathcal{Q}w,w)$ for any positive operator $\mathcal{Q}$.
Thus, $\M-\K:\VV^-\to \VV^-$ is boundedly invertible, and, by \cref{eq:op2},
\begin{align}\label{eq:getv}
    \uo= (\M-\K)^{-1} (\ell^- -\A \ue).
\end{align}
Using \cref{eq:getv} in \cref{eq:op1} and introducing 
\[
\E:\WW^+\to(\WW^+)',\quad \E \ue = \R \ue + \A' (\M-\K)^{-1} \A \ue + \M \ue,
\]
and $\ell(w^+)=\ell^+(w^+) +((\M-\K)^{-1} q,\A w^+)$ for $w^+\in\WW^+$,
the even-parity formulation of the radiative transfer equation is: Find $\ue\in\WW^+$ such that
\begin{align}\label{eq:ep}
    (\E - \K) \ue = \ell.
\end{align}
As shown in \cite{egger2012mixed}, the even-parity formulation is a coercive, symmetric problem, which is well-posed by the Lax-Milgram lemma. Solving \cref{eq:ep} for $\ue\in\WW^+$, we can retrieve $\uo\in\VV^-$ by \cref{eq:getv}. In turn, $(\ue,\uo)\in\WW^+\times \VV^-$ solves \cref{eq:op1}--\cref{eq:op2}.

\subsection{Preconditioning of $\M-\K$}\label{sec:pre}
We generalize the inequalities \cref{eq:equivalence_low} to obtain spectrally equivalent approximations to $\M-\K$. Since $\K=\ss\S$, we can construct approximations to $\K$ by approximating $\S$. To do so let us define for $N\in\NN$ and $v\in\VV$
\begin{align}
 \S_N v =\sum_{l=0}^{N} g^{l} \sum_{m=-l}^{l} (v,H^l_m)_{S} H^l_m.
\end{align}
Notice that the summation is only over even integers $0\leq l\leq N$ if $v\in\VV^+$ and only over odd ones if $v\in\VV^-$. The approximation of $\K$ is then defined by $\K_N=\ss\S_N$.
\begin{lemma}\label{lem:spectral_equivalence_inf}
The operator $\M-\K_N$ is spectrally equivalent to $\M-\K$, that is
\begin{align*}
  \big(1-cg^{N+1}\big)  ((\M-\K_N)v,v)\leq 
    ((\M-\K)v,v)
    \leq
    ((\M-\K_N)v,v)
\end{align*}
for all $v\in\VV$,
with $c=\|\ss/\st\|_\infty$. In particular, $\M-\K_N$ is invertible.
\end{lemma}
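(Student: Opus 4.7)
The plan is to diagonalize both $\M-\K$ and $\M-\K_N$ simultaneously by expanding $v$ in the spherical-harmonic basis in the $\s$-variable, pointwise in $\r\in R$. Because $\M$ is multiplication by $\st(\r)$ and the operators $\S,\S_N$ act only in $\s$ with common eigenfunctions $H^l_m$, both quadratic forms will become integrals of manifestly non-negative scalar sums, so that the whole lemma collapses to a mode-by-mode estimate.

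Concretely, I would write $v(\s,\r)=\sum_{l,m}v_{l,m}(\r)H^l_m(\s)$ and set $a_l(\r)=\sum_{m=-l}^{l}|v_{l,m}(\r)|^2$. Parseval together with \cref{eq:eigS} yields
\begin{align*}
((\M-\K)v,v)   &= \int_R \sum_{l\geq 0}\bigl(\st-\ss g^l\bigr)\,a_l\,d\r,\\
((\M-\K_N)v,v) &= \int_R\Bigl(\sum_{l\leq N}\bigl(\st-\ss g^l\bigr)\,a_l\;+\;\sum_{l>N}\st\,a_l\Bigr)d\r,
\end{align*}
and every summand is non-negative because $0\leq g<1$ forces $\st-\ss g^l\geq \sa\geq 0$. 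The upper bound $((\M-\K)v,v)\leq ((\M-\K_N)v,v)$ is then immediate, since the difference equals $\int_R \ss\sum_{l>N} g^l a_l\,d\r\geq 0$.

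The lower bound is the only substantive step. Rearranged, it is equivalent to
\[
\int_R \ss\sum_{l>N}g^l a_l\,d\r\;\leq\; cg^{N+1}\,((\M-\K_N)v,v).
\]
On the left I would use $g^l\leq g^{N+1}$ for $l>N$; on the right I would discard the non-negative $l\leq N$ contribution to keep only $cg^{N+1}\int_R \st\sum_{l>N}a_l\,d\r$. The remaining inequality is then exactly the pointwise bound $\ss(\r)\leq c\,\st(\r)$, which is precisely the definition of $c=\|\ss/\st\|_\infty$. The conceptual point — and what I expect to be the only real obstacle — is recognizing that on the high modes one has only the coarse lower bound $\st\,a_l$ available in $((\M-\K_N)v,v)$ and that the resulting loss of $\ss$ vs.\ $\st$ is exactly what the constant $c$ is designed to absorb.

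Finally, invertibility of $\M-\K_N$ will follow at once from coercivity: since $\st-\ss g^l\geq \sa\geq \gamma$ and $\st\geq \gamma$, the diagonal representation above gives $((\M-\K_N)v,v)\geq \gamma\|v\|_\VV^2$, so the bounded, self-adjoint operator $\M-\K_N$ on $\VV$ is coercive and hence boundedly invertible.
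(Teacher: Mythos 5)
Your proof is correct and takes essentially the same approach as the paper: expand $v$ in spherical harmonics, diagonalize both quadratic forms mode-by-mode, bound the tail by $g^{N+1}$ and absorb $\ss\leq c\st$ into the constant. The only slight difference is that you derive invertibility directly from the coercivity estimate $\st-\ss g^l\geq\sa\geq\gamma$, whereas the paper simply cites an external lemma; your version is a bit more self-contained but rests on the same idea.
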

\begin{proof}
	We use that $\{H^m_l\}$ is a complete orthonormal system of $L^2(S)$. Hence, any $v\in \VV=L^2(S)\otimes L^2(R)$ has the expansion
	\begin{align*}
	    v(\s,\r) = \sum_{l=0}^\infty \sum_{m=-l}^l v^l_m(\r) H^l_m(\s),
	\end{align*}
	with $v^l_m\in L^2(R)$ and $\|v\|_{\VV}^2=\sum_{l=0}^\infty \sum_{m=-l}^l \|v^l_m\|^2_{L^2(R)}<\infty$, and
	\begin{align*}
	    ((\M-\K_N)v,v) = \sum_{l=0}^L\sum_{m=-l}^l \Big\|\sqrt{\st-g^l\ss}v^l_m\Big\|_{L^2(R)}^2 + \sum_{l=N+1}^\infty\sum_{m=-l}^l \Big\|\sqrt{\st} v^l_m\Big\|_{L^2(R)}^2.
	\end{align*}
	Using $c=\|\ss/\st\|_\infty$ it follows that
	\begin{align}\label{eq:error_K}
        0\leq ( (\K-\K_N)v,v)&=\sum_{l=N+1}^\infty g^{l}\sum_{m=-l}^l \Big\|\sqrt{\ss} v^l_m\Big\|_{L^2(R)}^2\leq c g^{N+1} ((\M-\K_N)v,v).
    \end{align}
The inequalities in the statement then follow from
$((\M-\K)v,v) = ((\M-\K_N)v,v) - ((\K-\K_N)v,v)$, while
invertibility follows from \cite[Lemma 2.14]{egger2012mixed}.
\end{proof}
\section{Iteration for the even-parity formulation}\label{sec:iteration_ep}
We generalize the Richardson iteration of \cite{palii2020convergent} for the radiative transfer equation with isotropic scattering to the anisotropic case and equip them with a suitable preconditioner which we will investigate later. We restrict ourselves to a presentation suitable for the error analysis and postpone the linear algebra setting and the discussion of its efficient realization to \cref{sec:richardson_discrete}.
We consider the solution of \cref{eq:ep} along the following two steps: 

\noindent
\textbf{Step (i)} Given $\ue_n\in\WW^+$ and a symmetric and positive definite operator $\P_1:(\WW^+)'\to \WW^+$, we compute
\begin{align}\label{eq:richardson_general}
\ue_{n+\frac{1}{2}}=\ue_n-\P_1((\E-\K)\ue_n-\ell).
\end{align} 
\textbf{Step (ii)} Compute a subspace correction to $\ue_{n+1/2}$ based on the observation that the error $e^+_{n+1/2}=\ue-\ue_{n+1/2}$ satisfies
\begin{align}\label{eq:ep_error_eq}
 (\E-\K) e^+_{n+\frac{1}{2}} = ((\E-\K)\P_1-\Id)((\E-\K)\ue_n-\ell).
\end{align}
Solving \cref{eq:ep_error_eq} is as difficult as solving the original problem. Let $\WW_N^+\subset\WW^+$ be closed, and consider the Galerkin projection $\P_G:\WW^+\to\WW_N^+$ onto $\WW_N^+$ defined by
\begin{align}\label{eq:galerkin_projection}
    \langle (\E-\K) \P_G w,v\rangle = \langle (\E-\K) w,v\rangle\quad\text{for all } v\in\WW_N^+.
\end{align}
Using  \cref{eq:ep_error_eq}, the correction $\ue_{c,n}=\P_G e^+_{n+1/2}$, is then characterized as the solution to
\begin{align}\label{eq:ep_corr}
    \langle (\E-\K)\ue_{c,n} ,v\rangle = \langle(\E-\K)\P_1-\Id)((\E-\K)\ue_n-\ell),v\rangle\quad\text{for all } v\in\WW_N^+,
\end{align}
where the right-hand side involves available data only. The update is performed via
\begin{align}\label{eq:ep_update}
    \ue_{n+1} = \ue_{n+\frac{1}{2}} + \ue_{c,n}.
\end{align}

Since $\P_G$ is non-expansive in the norm induced by $\E-\K$, the error analysis for the overall iteration \cref{eq:richardson_general,eq:ep_update} relies on the spectral properties of $\P_1$. Therefore, the following theoretical investigations consider the generalized eigenvalue problem
\begin{align}\label{eq:richardson_gevp}
    (\E-\K)w = \lambda \P_1^{-1} w.
\end{align}
The following lemma is well-known and we provide a proof for later reference.
\begin{lemma}\label{lem:convergence_richardson_general}
	Let $0<\beta\leq 1$ and assume that the eigenvalues $\lambda$ of \cref{eq:richardson_gevp} satisfy $\beta\leq \lambda\leq 1$. Then, for any $\ue_n\in\WW^+$, $\ue_{n+1/2}$ defined via \cref{eq:richardson_general} satisfies
	\begin{align*}
		\|\ue-\ue_{n+\frac{1}{2}}\|_{\E-\K} \leq (1-\beta) \|\ue-\ue_n\|_{\E-\K}.
	\end{align*}
\end{lemma}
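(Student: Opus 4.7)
The plan is to view the Richardson update as a simple fixed-point iteration on the error and then to reduce the contraction estimate to a scalar spectral bound. First I would derive the error recursion by subtracting the identity $\ue=\ue-\P_1((\E-\K)\ue-\ell)$, which holds because $(\E-\K)\ue=\ell$ by \cref{eq:ep}, from the update rule \cref{eq:richardson_general}. This yields
\begin{align*}
\ue-\ue_{n+\frac{1}{2}}=\bigl(\Id-\P_1(\E-\K)\bigr)(\ue-\ue_n).
\end{align*}
Writing $T=\P_1(\E-\K)$, the task therefore reduces to bounding the operator norm of $\Id-T$ in the energy inner product $(u,v)_{\E-\K}:=\langle(\E-\K)u,v\rangle$.

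Next I would collect the functional-analytic setup. Since $\E-\K$ is symmetric and coercive on $\WW^+$ (as used in the well-posedness argument for \cref{eq:ep}), the form $(\cdot,\cdot)_{\E-\K}$ turns $\WW^+$ into a Hilbert space whose topology is equivalent to the original one. A short calculation that exploits the symmetry of both $\P_1$ and $\E-\K$ shows that $T$ is self-adjoint with respect to $(\cdot,\cdot)_{\E-\K}$. Moreover, the relation $Tw=\lambda w \Longleftrightarrow (\E-\K)w=\lambda\P_1^{-1}w$ identifies the eigenvalues of $T$ with those appearing in \cref{eq:richardson_gevp}, and more generally the Rayleigh-quotient characterization places the full spectrum $\sigma(T)$ inside $[\beta,1]$.

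With this in place, $\Id-T$ is a bounded self-adjoint operator on $(\WW^+,(\cdot,\cdot)_{\E-\K})$ with spectrum in $[0,1-\beta]$, so the spectral theorem for bounded self-adjoint operators gives $\|\Id-T\|_{\E-\K}\le 1-\beta$, from which the claimed contraction follows on applying $\Id-T$ to $\ue-\ue_n$. The main obstacle I anticipate is the bookkeeping at the interface between operators on $\WW^+$ and forms on $\WW^+\times\WW^+$: one has to verify carefully that $\P_1:(\WW^+)'\to\WW^+$ and $\E-\K:\WW^+\to(\WW^+)'$ compose to a bounded self-adjoint $T$ on the energy Hilbert space, and that the hypothesis on \cref{eq:richardson_gevp} really translates into $\sigma(T)\subset[\beta,1]$. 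Once this identification is justified, the remainder of the argument is a routine application of the spectral theorem.
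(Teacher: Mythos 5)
Your proof is correct and follows essentially the same spectral route as the paper: both reduce the contraction estimate to the statement that $\Id-\P_1(\E-\K)$ has $(\E-\K)$-operator norm at most $1-\beta$. The paper makes this explicit by assuming a complete eigensystem $\{(w_k,\lambda_k)\}$ of \cref{eq:richardson_gevp}, expanding the error $e^+_n=\sum_k a_k w_k$, and computing the norms of $e^+_n$ and $e^+_{n+1/2}$ term by term; you instead observe that $T=\P_1(\E-\K)$ is self-adjoint in the energy inner product and invoke the spectral theorem to bound $\|\Id-T\|_{\E-\K}$ by $\sup_{\lambda\in\sigma(T)}|1-\lambda|$. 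These are two phrasings of the same idea, but your version is technically a bit cleaner: it does not rely on the tacit assumption that the generalized eigenfunctions form a complete basis of $\WW^+$, which in the infinite-dimensional setting is not immediate. The one thing you would still need to pin down is that the lemma's hypothesis (stated for eigenvalues) really controls all of $\sigma(T)$; as you note, this is handled by the Rayleigh-quotient bounds, which is exactly how the hypothesis is produced in \cref{lem:spectral_equivalence}.
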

\begin{proof}
Assume that $\{(w_k,\lambda_k)\}_{k\geq 0}$ is the eigensystem of the generalized eigenvalue problem \cref{eq:richardson_gevp}.
For any $\ue_n$, the error $e^+_n=\ue-\ue_n$ satisfies
\begin{align}\label{eq:ep_richardson_error}
e^+_{n+\frac{1}{2}}=(\Id-\P_1(\E-\K))e^+_n.
\end{align} 
Using the expansion $e^+_n=\sum_{k=0}^\infty a_k w_k$, we compute
$\|e^+_n\|^2_{\E-\K} = \sum_{k=0}^\infty a_k^2 \lambda_k$.
Using \cref{eq:ep_richardson_error}, we thus obtain 
$e^+_{n+1/2} = \sum_{k=0}^\infty (1-\lambda_k) a_k w_k$, and hence
\[
\|e^+_{n+\frac{1}{2}}\|_{\E-\K}^2 = \sum_{k=0}^\infty (1-\lambda_k)^2 \lambda_k a_k^2 \leq \sup_{0\leq k<\infty}(1-\lambda_k)^2 \|e^+_n\|^2_{\E-\K}.
\]
Since $0<\beta\leq \lambda_k\leq 1$ by assumption, the assertion follows.
\end{proof}
The next statement is a direct consequence of \cref{lem:convergence_richardson_general} and the observation that $e_{n+1}^+=(\I-\P_G)e_{n+1/2}^+$ satisfies 
\begin{align}\label{eq:bestapprox}
    \|e_{n+1}^+\|_{\E-\K}=\inf_{v\in\WW^+_N} \|e_{n+\frac{1}{2}}^+-v\|_{\E-\K}.
\end{align}
\begin{lemma}\label{lem:convergence_richardson_ep}
	Let $\WW_N^+\subset \WW^+$ be closed, and assume that the eigenvalues $\lambda$ of \cref{eq:richardson_gevp} satisfy $\beta\leq \lambda\leq 1$ for some $0<\beta\leq 1$.
	Then, for any $\ue_0\in\WW^+$, the sequence $\{\ue_n\}$ defined in \cref{eq:richardson_general,eq:ep_update} converges linearly to the solution $\ue$ of \cref{eq:ep}, i.e., 
	\begin{align}
		\|\ue-\ue_{n+1}\|_{\E-\K} \leq (1-\beta)\|\ue-\ue_n\|_{\E-\K}.
	\end{align}
\end{lemma}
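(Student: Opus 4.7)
The plan is to reduce the claim to Lemma~\ref{lem:convergence_richardson_general} by exploiting that the Galerkin correction step is a contraction in the energy norm $\|\cdot\|_{\E-\K}$. The main observation, already highlighted in the sentence preceding the lemma, is that the error after the correction is simply the projection of the error after the Richardson step onto the $(\E-\K)$-orthogonal complement of $\WW_N^+$.

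Concretely, I would first rewrite the update \cref{eq:ep_update} in terms of the error. Using $\ue_{c,n}=\P_G e^+_{n+1/2}$, subtracting the exact solution $\ue$ gives
\[
  e^+_{n+1} \;=\; \ue - \ue_{n+1} \;=\; (\Id-\P_G)\,e^+_{n+1/2}.
\]
Since the even-parity operator $\E-\K$ is symmetric and positive definite on $\WW^+$ (as recalled in \cref{sec:ep}), the bilinear form $\langle (\E-\K)\cdot,\cdot\rangle$ is an inner product on $\WW^+$, and by the defining relation \cref{eq:galerkin_projection}, $\P_G$ is the orthogonal projection onto $\WW_N^+$ in this inner product. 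Hence $\Id-\P_G$ is also an orthogonal projection, which immediately yields the best-approximation identity \cref{eq:bestapprox}. Choosing $v=0$ in \cref{eq:bestapprox} gives the non-expansiveness
\[
  \|e^+_{n+1}\|_{\E-\K} \;=\; \inf_{v\in\WW^+_N}\|e^+_{n+\frac12}-v\|_{\E-\K} \;\leq\; \|e^+_{n+\frac12}\|_{\E-\K}.
\]

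The proof then closes by combining this bound with \cref{lem:convergence_richardson_general}, which under the spectral assumption $\beta\leq\lambda\leq 1$ yields $\|e^+_{n+1/2}\|_{\E-\K}\leq(1-\beta)\|e^+_n\|_{\E-\K}$. Chaining the two inequalities produces the stated contraction factor $1-\beta$, and linear convergence to the unique solution of \cref{eq:ep} follows by induction. There is no serious obstacle: both ingredients—the spectral estimate of \cref{lem:convergence_richardson_general} and the symmetry/positivity of $\E-\K$ that makes $\P_G$ an orthogonal projection—have been established earlier, so the argument is essentially a one-line multiplicative subspace-correction estimate.
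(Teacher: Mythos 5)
Your argument is correct and matches the paper's reasoning exactly: the paper states the result as a direct consequence of Lemma~\ref{lem:convergence_richardson_general} together with the identity $e_{n+1}^+=(\Id-\P_G)e_{n+1/2}^+$ and the best-approximation property \cref{eq:bestapprox}, which is precisely the chain of observations you spell out. You simply fill in the details the paper leaves implicit (that $\P_G$ is the $(\E-\K)$-orthogonal projection, hence $\Id-\P_G$ is non-expansive), so there is nothing to add.
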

In view of the previous lemma fast convergence $\ue_n\to\ue$ can be obtained by ensuring that $\beta$ is close to one or by making the best-approximation error in \cref{eq:bestapprox} small.
These two possibilities are discussed next.

\begin{lemma}\label{lem:spectral_equivalence}
Let $\P_1$ be defined either by (i) $\P_1^{-1}=\E$ or (ii)
\[
\P_1^{-1}=\E_0=(1-cg)^{-1} \A'\M^{-1}\A + \M + \R.
\]
Then $\P_1$ is spectrally equivalent to $\E-\K$, i.e.,
\begin{align*}
 (1-c)(\P_1^{-1} w^+,w^+) \leq ((\E-\K)w^+,w^+) \leq (\P_1^{-1} w^+,w^+),
\end{align*}
for all $w^+\in\WW^+$. It holds $1-\beta=c$ in \cref{lem:convergence_richardson_ep} in both cases.
\end{lemma}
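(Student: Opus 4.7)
The plan is to split $\E-\K$ into its three natural pieces and compare term-by-term with $\P_1^{-1}$ using the spectral bounds for scattering. First, observe that since $\K$ leaves $\WW^+$ invariant we can write, for $w^+\in\WW^+$,
\[
((\E-\K)w^+,w^+) = (\R w^+,w^+) + (\A'(\M-\K)^{-1}\A w^+,w^+) + ((\M-\K)w^+,w^+),
\]
and the key ingredient I will use repeatedly is that $(\K v,v)\le c(\M v,v)$ for all $v\in\VV$, which follows from $\K=\ss\S$, the fact that the spherical harmonics have eigenvalues $g^l\in[0,1]$ of $\S$ by \cref{eq:eigS}, and the definition $c=\|\ss/\st\|_\infty$. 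I will also use \cref{lem:spectral_equivalence_inf} with $N=0$: for $v\in\VV^-$ one has $\K_0 v=0$ (since the $l=0$ spherical harmonic is even), so the lemma specializes to
\[
(1-cg)(\M v,v)\le ((\M-\K)v,v)\le (\M v,v),\qquad v\in\VV^-.
\]
Inverting this equivalence on $\VV^-$ gives $(\M v,v)\le (\M-\K)v,v)\le(1-cg)^{-1}(\M v,v)$ for the inverse operators (in the sense of bilinear forms), hence $\A'\M^{-1}\A\le \A'(\M-\K)^{-1}\A\le (1-cg)^{-1}\A'\M^{-1}\A$ as operators $\WW^+\to(\WW^+)'$.

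For case (i), $\P_1^{-1}=\E$, the difference reduces to $(\P_1^{-1}w^+,w^+)-((\E-\K)w^+,w^+)=(\K w^+,w^+)$. The upper bound $((\E-\K)w^+,w^+)\le(\P_1^{-1}w^+,w^+)$ is immediate from positivity of $\K$ on $\WW^+$. For the lower bound I bound $(\K w^+,w^+)\le c(\M w^+,w^+)$ and then use $\M\le \E$ (since $\R$ and $\A'(\M-\K)^{-1}\A$ are positive semidefinite) to get $(\K w^+,w^+)\le c(\P_1^{-1}w^+,w^+)$, so that $((\E-\K)w^+,w^+)\ge(1-c)(\P_1^{-1}w^+,w^+)$.

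For case (ii), $\P_1^{-1}=\E_0$, I compare the three pieces of $\P_1^{-1}$ and $\E-\K$ separately. The boundary terms $\R$ are identical, so they contribute equally to both bounds. For the streaming term the spectral equivalence derived above yields
\[
\A'(\M-\K)^{-1}\A\le (1-cg)^{-1}\A'\M^{-1}\A,
\]
which gives the upper bound, while the lower bound follows from $(1-c)(1-cg)^{-1}\le 1$ (use $g\in[0,1)$, so $cg\le c$) together with $\A'\M^{-1}\A\le \A'(\M-\K)^{-1}\A$. For the zeroth-order term I again use $0\le (\K w^+,w^+)\le c(\M w^+,w^+)$ to obtain $(1-c)(\M w^+,w^+)\le ((\M-\K)w^+,w^+)\le (\M w^+,w^+)$. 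Adding the three estimates yields the claimed two-sided bound with the same constants as in case (i).

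The final statement $1-\beta=c$ then follows directly from \cref{lem:convergence_richardson_ep}: the spectral equivalence says that the eigenvalues of the generalized eigenvalue problem \cref{eq:richardson_gevp} lie in $[1-c,1]$, so $\beta=1-c$ and the contraction factor is $1-\beta=c$. I do not expect a real obstacle here — the only point requiring mild care is the lower-bound estimate in case (ii), where the inequality $(1-c)(1-cg)^{-1}\le 1$ must be used to absorb the $(1-cg)^{-1}$ prefactor from the streaming term, and this is precisely why $c$ (rather than $cg$) ends up as the rate.
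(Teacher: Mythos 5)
Your proof is correct and follows essentially the same route as the paper: the paper's one-line proof simply invokes \cref{lem:spectral_equivalence_inf} together with the observation that $\A w^+\in\VV^-$, and your term-by-term comparison of $\R$, $\A'(\M-\K)^{-1}\A$, and $\M-\K$ against the corresponding pieces of $\P_1^{-1}$ is exactly the intended unpacking of that remark. (One cosmetic slip: the displayed ``inverted'' inequality should read $(\M^{-1}v,v)\le ((\M-\K)^{-1}v,v)\le (1-cg)^{-1}(\M^{-1}v,v)$; the conclusion $\A'\M^{-1}\A\le \A'(\M-\K)^{-1}\A\le (1-cg)^{-1}\A'\M^{-1}\A$ that you draw from it is correct.)
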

\begin{proof}
Since $\A w^+\in\VV^-$, the result is a direct consequence of \cref{lem:spectral_equivalence_inf}.
\end{proof}

\begin{remark}\label{rem:ep_choices}
We can further generalize the choices for $\P_1^{-1}$ by choosing $N^+\geq -1$, $N^-\geq 0$, and $\gamma_{N^-}=1/(1-cg^{N^-+1})$. Then
$$\P_1^{-1}=\P^{-1}_{N^+,N^-}=\R + \gamma_{N^-}\A' (\M-\K_{N^-})^{-1}\A + \M -\K_{N^+}$$ and $\E-\K$ are spectrally equivalent, i.e.,
\begin{align*}
(1-cg^{\min(N^-,N^+)+1})(\P_1^{-1} w^+,w^+) \leq ((\E-\K)w^+,w^+) \leq (\P_1^{-1} w^+,w^+)
\end{align*}
for all $w^+\in\WW^+$. In particular, $1-\beta=cg^{\min(N^-,N^+)+1}$ in \cref{lem:convergence_richardson_ep}. \end{remark}
\begin{remark} For isotropic scattering $g=0$, we have that $\E=\E_0$. Thus, the above choices can both be understood as generalizations of the iteration considered in \cite{palii2020convergent}.
\end{remark}

The preconditioners of \cref{rem:ep_choices} yield arbitrarily small contraction rates for sufficiently large $N^+$ and $N^-$. However, the efficient implementation of such a preconditioner seems to be rather challenging.
Therefore, we focus on the preconditioners defined in \cref{lem:spectral_equivalence_inf} in the following. Since these choices for $\P_1$ yield slow convergence for $c\approx 1$, we need to construct $\WW_N^+$ properly. This is done along the following lines, see \cref{sec:subspace_correction} for precise definition.

\subsection{A motivation for constructing effective subspaces}\label{sec:motivation}
From the proof of \cref{lem:convergence_richardson_general}, one sees that error modes associated to small eigenvalues $\lambda$ of \cref{eq:richardson_gevp} converge slowly. Hence, in order to regain fast convergence, such modes should be approximated well by functions in $\WW_N^+$, see \cref{eq:bestapprox}. Next we give a heuristic motivation that such slowly convergent modes might be approximated well by low-order spherical harmonics.

Since we use $\P_1^{-1}\approx\E$ below, let us fix $\P_1^{-1}=\E$ in this subsection. Furthermore, let $w$ be a slowly damped mode satisfying \cref{eq:richardson_gevp} with $\lambda\approx 1-c \approx 0$. Then $w$ satisfies
$\K w = \delta \E w$ with $\delta=1-\lambda\approx c \approx 1$. Let us expand the angular part of $w$ into spherical harmonics,
\begin{align*}
	    w(\s,\r) = \sum_{l=0}^\infty \sum_{m=-l}^l w^l_m(\r) H^l_m(\s),
\end{align*}
where $w^l_m=0$ if $l$ is odd. As in the proof of \cref{lem:spectral_equivalence}, we obtain
\begin{align*}
    \K w = \sum_{l=0}^\infty g^l \sum_{m=-l}^l \ss(r) w^l_m(\r) H^l_m(\s).
\end{align*}
Since $\ss\leq \st$, orthogonality of the spherical harmonics implies
\begin{align*}
    \sum_{l=0}^\infty c g^l \sum_{m=-l}^l \|\sqrt{\st} w^l_m\|_{L^2(R)}^2\geq (\K w,w)&=\\
    \delta \bigg( \langle\R w,w\rangle + \|\sgrad &w\|_{(\M-\K)^{-1}}^2 + \sum_{l=0}^\infty \sum_{m=-l}^l \|\sqrt{\st} w^l_m\|_{L^2(R)}^2\bigg).
\end{align*}
Neglecting the contributions from $\R$ and $\sgrad$, we see that
\begin{align}\label{eq:positivity}
    \sum_{l=0}^\infty (c g^l-\delta) \sum_{m=-l}^l \|\sqrt{\st} w^l_m\|_{L^2(R)}^2\geq 0.
\end{align}
Since $\delta\approx c\approx 1$ by assumption and $g<1$, \cref{eq:positivity} can hold true only if $w$ can be approximated well by spherical harmonics of degree less than or equal to $N$ for some moderate integer $N$.

Note that this statement quantifies approximation in terms of the $L^2$-norm. However, using recurrence relations of spherical harmonics to incorporate the terms $\langle \R w,w\rangle+\|\sgrad w\|^2_{(\M-\K)^{-1}}$ into \cref{eq:positivity}, suggests that a similar statement also holds for the $\E-\K$-norm. A full analysis of this statement seems out of the scope of this paper, and we postpone it to future research.
We conclude that effective subspaces $\WW_N^+$ consist of linear combinations of low-order spherical harmonics, and we employ this observation in our numerical realization.

\section{Galerkin approximation}\label{sec:galerkin}
The iterative scheme of the previous section has been formulated for infinite-dimensional function spaces $\WW^+$ and $\WW_N^+\subset \WW^+$.
For the practical implementation we recall the approximation spaces described in \cite{egger2012mixed} and \cite[Section 6.3]{palii2020convergent}. Let $\ThR$ and $\ThS$ denote shape regular triangulations of $R$ and $S$, respectively. For simplicity we assume the triangulations to be quasi-uniform.
To properly define even and odd functions associated with the triangulations, we further require that $-K_S\in \ThS$ for each spherical element $K_S\in\ThS$. The latter requirement can be ensured by starting with a triangulation of a half-sphere and reflection.
Let $\XX_h^+=\PP_1^c(\ThR)$ denote the vector space of continuous, piecewise linear functions subordinate to the triangulation $\ThR$ with basis $\{\varphi_i\}$ and dimension $n_R^+$, and let $\XX_h^-=\PP_0(\ThR)$ denote the vector space of piecewise constant functions subordinate to $\ThR$ with basis $\{\chi_j\}$ and dimension $n_R^-$. Similarly, we denote by $\SS_h^+=\PP_0(\ThS)\cap L^2(S)^+$ and $\SS_h^-=\PP_1(\ThS)\cap L^2(S)^-$ the vector spaces of even, piecewise constant and odd, piecewise linear functions subordinate to the triangulation $\ThS$, respectively.
We can construct a basis $\{\mu_k^+\}$ for $\SS_h^+$ by choosing $n_S^+$ many triangles with midpoints in a given half-sphere, and define the functions $\mu_k^+$ to be the indicator functions of these triangles. For any other point $s\in S$, we find $K_S\in \ThS$ with midpoint in the given half-sphere such that $-s\in K_S$ and we define $\mu_k^+(s)=\mu_k^+(-s)$. A similar construction leads to a basis $\{\psi_l^-\}$ of $\SS_h^-$.
The conforming approximation spaces are then defined through tensor product constructions, $\WW_h^+=\SS_h^+\otimes \XX_h^+$, $\VV_h^-=\SS_h^-\otimes\XX_h^-$. Thus, for some coefficient matrices $\big[\bUe_{i,k}\big]\in\RR^{n_R^+\times n_S^+}$ and $\big[\bUo_{j,l}\big]\in\RR^{n_R^-\times n_S^-}$, any $\ue_h\in\WW_h^+$ and $\uo_h\in\VV_h^-$ can be expanded as
\begin{align}\label{eq:solutionrepresentation}
    \ue_h = \sum_{i=1}^{n_R^+}\sum_{k=1}^{n_S^+} \bUe_{i,k} \varphi_i \mu_k^+,\qquad
    \uo_h = \sum_{j=1}^{n_R^-}\sum_{l=1}^{n_S^-} \bUo_{j,l} \chi_j \psi_l^-.
\end{align}
The Galerkin approximation of \cref{eq:op1}--\cref{eq:op2} computes $(\ue_h,\uo_h)\in\WW_h^+\times\VV_h^-$ such that
\begin{align}
    \R \ue_h - \A' \uo_h + \M \ue_h &= \K \ue_h +\ell^+ \qquad\text{ in } (\WW_h^+)', \label{eq:op1h}\\
    \A \ue_h + \M \uo_h &= \K \uo_h + \ell^- \qquad \text{ in } \VV_h^-.\label{eq:op2h}
\end{align}
The discrete mixed system \cref{eq:op1h}--\cref{eq:op2h} can be solved uniquely \cite{egger2012mixed}.
Denoting $\bu^\pm=\operatorname{vec}(\bU^\pm)$ the concatenation of the columns of the matrices $\bU^\pm$ in a vector, the mixed system \cref{eq:op1h}--\cref{eq:op2h} can be written as the following linear system
\begin{align}\label{eq:mixed_discrete}
    \begin{bmatrix}\bR + \bMe & -\bA^\intercal\\ \bA & \bMo\end{bmatrix}\begin{bmatrix}\bue\\ \buo\end{bmatrix} = \begin{bmatrix} \bKe&\\ &\bKo \end{bmatrix}\begin{bmatrix}\bue\\ \buo\end{bmatrix} + \begin{bmatrix}\ble\\ \blo\end{bmatrix}.
\end{align}
The matrices in the system are given by
\begin{align}
\bKe={}&\bSe\otimes\bMse_s,&
\bKo={}&\bSo\otimes\bMso_s,\label{eq:bKebKo}\\
\bMe={}&\bMSpheree\otimes\bMse_t,&
\bMo={}&\bMSphereo\otimes\bMso_t,\label{eq:bMebMo}\\
\bA={}&\sum_{i=1}^d\bAi_i\otimes \bD_i,&
\bR={}&\operatorname{blkdiag}(\bRs_1,\ldots,\bRs_{n_S^+}),\label{eq:AR}
\end{align}
where we denote by Gothic letters the matrices arising from the discretization on $R$ and by Sans Serif letters matrices arising from the discretization on $S$, i.e.,
\begin{align*}
    (\bMso_t)_{j,j'} &=\int_R \sigma_t \chi_j \chi_{j'} dr,						& (\bSo)_{l,l'}    &= \int_S \S \psi_l^-\psi_{l'}^-ds,\\
    (\bMse_t)_{i,i'} &=\int_R \sigma_t \varphi_i \varphi_{i'} dr,				& (\bSe)_{k,k'}    &= \int_S \S \mu_k^+\mu_{k'}^+ ds,\\
    (\bD_n)_{j,i}    &= \int_R \frac{\partial \varphi_i}{\partial r_n} \chi_j dr,& (\bAi_n)_{l,k}   &= \int_S s_n \psi_{l}^- \mu_k^+ ds,\\
    (\bRs_k)_{i,i'}  &= \int_{\partial R} \varphi_i \varphi_{i'} \omega_k dr,	 & \omega_k&=\int_{S}|\sn|(\mu_k^+)^2ds .
\end{align*}
The matrices $\bMso_s$ and $\bMse_s$ are defined accordingly.
By $\bMSpheree$ and $\bMSphereo$ we denote the Gramian matrices in $L^2(S)$. 
We readily remark that all of these matrices are sparse, except for $\bSe$ and $\bSo$ which are dense. $\bMSpheree$ and $\bMSphereo$ are diagonal and $3\times3$ block diagonal, respectively.
Moreover, we note that $\bMso_t$ is a diagonal matrix.

To conclude this section let us remark that taking the Schur complement of \cref{eq:mixed_discrete} finally yields the matrix counterpart of the even-parity system \cref{eq:ep}, i.e.,
\begin{align}\label{eq:ep_discrete}
    \bE \bue = \bKe \bue + \bq
\end{align}
with $\bE=\bA\!^{\intercal}(\bMo-\bKo)^{-1}\bA+\bMe+\bR$ and $\bq=\ble+\bA\!^{\intercal} (\bMo-\bKo)^{-1}\blo$.

\section{Discrete preconditioned Richardson iteration}\label{sec:richardson_discrete}
After discretization, the iteration presented in \cref{sec:iteration_ep} becomes
\begin{align}\label{eq:Richardson_discrete}
    \bue_{n+1}=\bue_n-\bP_2\bP_1((\bE-\bKe)\bue_n-\bq).
\end{align}
The preconditioner $\bP_1$ is directly related to $\P_1$ in \cref{eq:richardson_general}. By denoting the coordinate vectors of the basis functions of the subspace $\WW^+_{h,N}\subset\WW^+_h$ by $\bW$, the matrix representation of the overall preconditioner is
\begin{align}\label{eq:product_pre}
\bP_2\bP_1 = \bP_1+ \bW\big(\bW\!^{\intercal}(\bE-\bKe)\bW\big)^{-1}\bW\!^{\intercal}(\bIde-(\bE-\bKe)\bP_1).
\end{align}
Denoting $\bP_G=\bW\big(\bW\!^{\intercal}(\bE-\bKe)\bW\big)^{-1}\bW\!^{\intercal}(\bE-\bKe)$ the matrix representation of the Galerkin projection $\P_G$ defined in \cref{eq:galerkin_projection}, the iteration matrix admits the factorization
\[
\bIde-\bP_2\bP_1(\bE-\bKe) = (\bIde-\bP_G)\big(\bIde-\bP_1(\bE-\bKe)\big).
\]
The discrete analog of \cref{lem:convergence_richardson_ep} implies that the sequence $\{\bue_n\}$ generated by \eqref{eq:Richardson_discrete} converges for any initial choice $\bue_0$ to the solution $\bue$ of \cref{eq:ep_discrete}. More precisely, by choosing $\bP_1$ according to \cref{lem:spectral_equivalence}, there holds
\begin{align}\label{eq:convergence_discrete}
    \|\bue-\bue_{n+1}\|_{\bE-\bKe}\leq \eta \|\bue-\bue_{n}\|_{\bE-\bKe},
\end{align}
where $0\leq \eta\leq c<1$ is defined as
\begin{align}\label{eq:def_eta}
   \eta=\sup \|(\bIde-\bP_G)(\bIde-\bP_1(\bE-\bKe))\bv^+\|_{\bE-\bKe}
\end{align}
with supremum taken over all $\bv^+\in\RR^{n_S^+ n_R^+}$ satisfying $\|\bv^+\|_{\bE-\bKe}=1$.
The realization of \cref{eq:Richardson_discrete} relies on the efficient application of $\bE$, $\bKe$, $\bP_1$ and $\bP_2$ discussed next.
\subsection{Application of $\bE$}\label{sec:pre_M-K}
In view of \cref{eq:bMebMo,eq:AR} it is clear that $\bA$, $\bMe$, and $\bMo$ can be stored and applied efficiently by using their tensor product structure, sparsity, and the characterization
\begin{align}\label{eq:tpmv}
(\bB\otimes\bC)\operatorname{vec}(\bX)=\operatorname{vec}(\mathbf{D})
\quad\Longleftrightarrow\quad
\bC\bX\bB^\intercal=\mathbf{D},
\end{align}
where $\bC\in\RR^{m\times n}$, $\bX\in\RR^{n\times p}$, $\bB\in\RR^{q\times p}$, $\mathbf{D}\in\RR^{m\times q}$. The boundary matrix $\bR$ consists of sparse diagonal blocks, and can thus also be applied efficiently, see \cref{sec:complexity} for details. The remaining operation required for the application of $\bE$ as given in \cref{eq:ep_discrete} is the application of $(\bMo-\bKo)^{-1}$, which deserves some discussion.
Since $\bMo-\bKo$ has a condition number of $(1-cg)^{-1}$ due to \cref{lem:spectral_equivalence_inf}, a straightforward implementation with the conjugate gradient method may be inefficient for $cg\approx 1$.
To mitigate the influence of $cg$, we can use \cref{lem:spectral_equivalence_inf} once more and obtain preconditioners derived from $\M-\K_N$, which lead to bounds on the condition number by $(1-(cg)^{N+2})^{-1}$ for odd $N$. In what follows, we comment on the practical realization of such preconditioners and their numerical construction. As we will verify in the numerical examples, these preconditioners allow the application of $(\bMo-\bKo)^{-1}$ in only a few iterations even for $g$ close to $1$.

After discretization, the continuous eigenvalue problem \cref{eq:eigS} for the scattering operator becomes the generalized eigenvalue problem
\[
\bSo \bWSphereo=\bMSphereo\bWSphereo\bLambda\!^-.
\]
Since $\bSo$ and $\bMSphereo$ are symmetric and positive, the eigenvalues satisfy $0\leq\lambda_l\leq g$, and we assume that they are ordered non-increasingly. The eigenvectors $\bWSphereo$ form an orthonormal basis $(\bWSphereo)\!^{\intercal}\bMSphereo\bWSphereo=\bIdSphereo$. Truncation of the eigendecomposition at index $d_N=(N+1)(N+2)/2$, $N$ odd, which is the number of odd spherical harmonics of order less than or equal to $N$, yields the approximation
\begin{align}\label{eq:lowrankscatteringmatrix}
\bSo = \bMSphereo\bWSphereo\bLambda\!^-(\bWSphereo)\!^{\intercal}\bMSphereo \approx \bMSphereo\bWSphereo_N\bLambda\!^-_N(\bWSphereo_N)^\intercal\bMSphereo=:\bSo_N.
\end{align}
The discrete version of $\M-\K_N$ then reads $\bMo-\bKo_N$, with $\bKo_N=\bSo_N\otimes \bMso_s$. 
An explicit representation of its inverse is given by the following lemma.
Its essential idea is to use an orthogonal decomposition of $\VV_h^-$ induced by the eigendecomposition of $\bSo$, and to employ the diagonal representation of $\bMo-\bKo_N$ in the angular eigenbasis.
\begin{lemma}\label{lem:apply_high_order_pre}
Let $\mathbf{b}\in\RR^{n_S^-n_R^-}$. Then $\bx=(\bMo-\bKo_N)^{-1}\bb$ is given by
\begin{align}\label{eq:MmKprecond}
\begin{aligned}
\bx
={}&
\Big(\bWSphereo_N\otimes\bIdso\Big)\Big(\bIdSphereo\otimes\bMso_t-\bLambda\!^-_N\otimes\bMso_s\Big)^{-1}\Big((\bWSphereo_N)^\intercal\otimes\bIdso\Big)\mathbf{b}\\
&\hspace*{2.06cm}+
\Big(\Big((\bMSphereo)^{-1}-\bWSphereo_N(\bWSphereo_N)^\intercal\Big)\otimes(\bMso_t)^{-1}\Big)\mathbf{b},
\end{aligned}
\end{align}
where $\bIdso$ and $\bIdSphereo$ denote the identity matrices of dimension $n_R^-$ and $d_N$, respectively.
\end{lemma}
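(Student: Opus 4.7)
The plan is to verify the proposed formula directly by multiplication, splitting the candidate solution $\bx=\bx_1+\bx_2$ into the two summands and checking $(\bMo-\bKo_N)(\bx_1+\bx_2)=\bb$. The key algebraic facts that make everything collapse are (i) the low-rank factorization $\bSo_N=\bMSphereo\bWSphereo_N\bLambda\!^-_N(\bWSphereo_N)^\intercal\bMSphereo$ from \cref{eq:lowrankscatteringmatrix}, and (ii) the $\bMSphereo$-orthonormality of the retained eigenvectors, $(\bWSphereo_N)^\intercal\bMSphereo\bWSphereo_N=\bIdSphereo$, which holds because $\bWSphereo_N$ consists of the first $d_N$ columns of the full eigenbasis normalized by $(\bWSphereo)^\intercal\bMSphereo\bWSphereo=\bI$.

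First I would handle the first summand $\bx_1$. Using the standard Kronecker identity $(A\otimes B)(C\otimes D)=AC\otimes BD$, I would multiply $\bMo-\bKo_N=\bMSphereo\otimes\bMso_t-\bSo_N\otimes\bMso_s$ by $\bWSphereo_N\otimes\bIdso$ and invoke $(\bWSphereo_N)^\intercal\bMSphereo\bWSphereo_N=\bIdSphereo$ to obtain the factorization
\[
(\bMo-\bKo_N)(\bWSphereo_N\otimes\bIdso)=(\bMSphereo\bWSphereo_N\otimes\bIdso)\bigl(\bIdSphereo\otimes\bMso_t-\bLambda\!^-_N\otimes\bMso_s\bigr).
\]
Applying this to the definition of $\bx_1$ and canceling the middle factor yields $(\bMo-\bKo_N)\bx_1=(\bMSphereo\bWSphereo_N(\bWSphereo_N)^\intercal\otimes\bIdso)\bb$.

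Next I would handle $\bx_2$. The action of $\bMo$ on the tensor factor $(\bMSphereo)^{-1}-\bWSphereo_N(\bWSphereo_N)^\intercal$ produces $\bMSphereo((\bMSphereo)^{-1}-\bWSphereo_N(\bWSphereo_N)^\intercal)\otimes\bIdso$, while the action of $\bKo_N$ on $\bx_2$ vanishes identically: plugging in the factorization of $\bSo_N$, the inner factor becomes $\bLambda\!^-_N(\bWSphereo_N)^\intercal\bMSphereo((\bMSphereo)^{-1}-\bWSphereo_N(\bWSphereo_N)^\intercal)$, and the $\bMSphereo$-orthonormality reduces the bracket to zero. Thus $(\bMo-\bKo_N)\bx_2=((\bI-\bMSphereo\bWSphereo_N(\bWSphereo_N)^\intercal)\otimes\bIdso)\bb$, where $\bI$ is the full $n_S^-\times n_S^-$ identity.

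Adding the two contributions, the complementary projectors telescope and yield $(\bMo-\bKo_N)(\bx_1+\bx_2)=(\bI\otimes\bIdso)\bb=\bb$. Invertibility of $\bMo-\bKo_N$ (which is guaranteed by the discrete analogue of \cref{lem:spectral_equivalence_inf}, since $\bKo_N$ differs from $\bKo$ only in the truncation of a spectrally equivalent operator) then implies $\bx=(\bMo-\bKo_N)^{-1}\bb$. The only mild subtlety, which I would flag explicitly, is that the well-definedness of the inner inverse $(\bIdSphereo\otimes\bMso_t-\bLambda\!^-_N\otimes\bMso_s)^{-1}$ in the block associated with each eigenvalue $\lambda_l\leq g$ follows from the same bound $c=\|\ss/\st\|_\infty<1$ that drives \cref{lem:spectral_equivalence_inf}, so this is not really an obstacle but a consistency check rather than a separate step.
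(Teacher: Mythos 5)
Your proof is correct and uses essentially the same ingredients as the paper's: the low-rank factorization of $\bSo_N$ from \cref{eq:lowrankscatteringmatrix}, the $\bMSphereo$-orthonormality $(\bWSphereo_N)^\intercal\bMSphereo\bWSphereo_N=\bIdSphereo$, and the Kronecker identity $(A\otimes B)(C\otimes D)=AC\otimes BD$. The only (minor) difference is the direction of the argument: the paper \emph{derives} the formula by splitting the unknown $\bx$ with the oblique projector $\bWSphereo_N(\bWSphereo_N)^\intercal\bMSphereo\otimes\bIdso$ and projecting the equation $(\bMo-\bKo_N)\bx=\bb$, whereas you \emph{verify} the given formula by direct multiplication, showing $(\bMo-\bKo_N)\bx_1=(\bMSphereo\bWSphereo_N(\bWSphereo_N)^\intercal\otimes\bIdso)\bb$ and $(\bMo-\bKo_N)\bx_2=((\bI-\bMSphereo\bWSphereo_N(\bWSphereo_N)^\intercal)\otimes\bIdso)\bb$ so that the complementary projectors telescope to $\bb$.
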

\begin{proof}
We first decompose $\bx$ as follows
\begin{align}\label{eq:splitx}
\bx
=
(\bWSphereo_N(\bWSphereo_N)\!^{\intercal}\bMSphereo\otimes\bIdso)\mathbf{x}
+
\big((\bIdSphereo-\bWSphereo_N(\bWSphereo_N)^\intercal\bMSphereo)\otimes\bIdso\big)\mathbf{x}.
\end{align}
Applying $(\bWSphereo_N)^\intercal\otimes\bIdso$ to $(\bMo-\bKo_N)\bx=\bb$, \cref{eq:lowrankscatteringmatrix}, and $\bMSphereo$-orthogonality of $\bWSphereo_N$ yield
\[
\big(\bIdSphereo\otimes\bMso_t-\bLambda\!^-_N\otimes\bMso_s\big)((\bWSphereo_N)^\intercal\bMSphereo\otimes\bIdso)\mathbf{x}=((\bWSphereo_N)^\intercal\otimes\bIdso)\mathbf{b}.
\]
Inverting $\bIdSphereo\otimes\bMso_t-\bLambda\!^-_N\otimes\bMso_s$ and applying $\bWSphereo_N\otimes\bIdso$ further yields
\[
(\bWSphereo_N(\bWSphereo_N)^\intercal\bMSphereo\otimes\bIdso)\mathbf{x}
=
(\bWSphereo_N\otimes\bIdso)\big(\bIdSphereo\otimes\bMso_t-\bLambda\!^-_N\otimes\bMso_s\big)^{-1}((\bWSphereo_N)^\intercal\otimes\bIdso)\mathbf{b}.
\]
For the other part in \cref{eq:splitx}, apply $((\bMSphereo)^{-1}-\bWSphereo_N(\bWSphereo_N)^\intercal)\otimes(\bMso_t)^{-1}$ to $(\bMo-\bKo_N)\bx=\bb$ and obtain
\[
((\bIdSphereo-\bWSphereo_N(\bWSphereo_N)^\intercal\bMSphereo)\otimes\bIdso)\mathbf{x}
=
\big(((\bMSphereo)^{-1}-\bWSphereo_N(\bWSphereo_N)^\intercal)\otimes(\bMso_t)^{-1}\big)\mathbf{b}.
\]
Substituting both expressions into \cref{eq:splitx} yields the assertion.
\end{proof}

\begin{remark}
If $\ss$ has huge variations, a more effective approximation to $\bKo$ can be obtained from the eigendecomposition
\[ \bMso_s \bIdso = \bMso_t \bIdso \Delta\]
with diagonal matrix $\Delta$ with entries $\Delta_{j}=\int_R \ss \chi_jdr/\int_R\st\chi_j dr$. 
The modified approximation $\widetilde\bKo$ is then computed by considering only those combinations of spatial and angular eigenfunctions for which $\lambda_l\Delta_j$ is above a certain tolerance.
\end{remark}

\subsection{Application of $\bKe$ and $\bKo$}
Although $\bKe$ and $\bKo$ provide a tensor product structure \cref{eq:bKebKo} involving the sparse matrices $\bMse_s$ and $\bMso_s$, the density of the scattering operators $\bSe$ and $\bSo$ becomes a bottleneck for iterative methods due to quadratic complexity in storage consumption and computational cost for assembly and matrix-vector products. $\mathcal{H}$- and $\mathcal{H}^2$-matrices, which can be considered as abstract variants of the fast multipole method \cite{FD2009,GR1987}, where developed in the context of the boundary element method and can realize the storage, assembly and matrix-vector multiplication in linear or almost linear complexity, see \cite{Bor2010,Hac2015} and the references therein. A sufficient condition for compressibility in these formats is the following.
\begin{definition}\label{def:prel.asSmooth}
Let $\tilde{S}\subset\mathbb{R}^d$ such that
\(k\colon \tilde{S}\times \tilde{S}\to\mathbb{R}\) is defined
and arbitrarily often differentiable for all $\tilde{\bfx}\neq\tilde{\bfy}$ with
$\tilde{\bfx},\tilde{\bfy}\in\tilde{S}$.
\(k\) is called \emph{asymptotically smooth} if 
\begin{align}
\big|\partial_{\tilde{\bfx}}^{\boldsymbol{\alpha}}\partial_{\tilde{\bfy}}^{\boldsymbol\beta}
k(\tilde{\bfx},\tilde{\bfy})\big|
\leq
C\frac{(|\boldsymbol{\alpha}|+|\boldsymbol{\beta}|)!}
{r^{{|\boldsymbol\alpha|+|\boldsymbol\beta|}}}
\|\tilde{\bfx}-\tilde{\bfy}\|^{-|\boldsymbol\alpha|-|\boldsymbol\beta|},\qquad\tilde{\bfx}\neq\tilde{\bfy},
\end{align}
independently of \(\boldsymbol\alpha\) and \(\boldsymbol\beta\) for some constants \(C,r>0\).
\end{definition}
While several methods \cite{DHS2006,DHP2016} can operate on the Henyey-Greenstein kernel on the sphere, most classical methods require an extension into space which we define as
\begin{align}\label{eq:HGextension}
	K(\tilde{\bfx},\tilde{\bfy}) = k(\bfx\cdot\bfy),
	\qquad
	\text{with}~\bfx=\tilde{\bfx}/\|\tilde{\bfx}\|,~\bfy=\tilde{\bfy}/\|\tilde{\bfy}\|.
\end{align}
The following result allows to use this extension in most $\mathcal{H}$- and $\mathcal{H}^2$-matrix libraries such as \cite{Boe,DHK+2020,Kri} in a black-box fashion.
\begin{lemma}
Let $g\geq 0$. Then  $K(\tilde{\bfx},\tilde{\bfy})$ is asymptotically smooth for $\tilde{\bfx},\tilde{\bfy}\in\RR^d\setminus \{0\}$.
\end{lemma}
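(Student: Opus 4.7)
The plan is to exhibit $K$ as a composition $K = k \circ F$ with $F(\tilde{\bfx}, \tilde{\bfy}) = \bfx \cdot \bfy$, prove analytic derivative estimates on the two factors separately, and combine them through Faà di Bruno's formula.

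First I would check analyticity of the Henyey--Greenstein profile. For $g \in [0,1)$ and $\tau \in [-1,1]$ one has $1 - 2g\tau + g^2 \geq (1-g)^2 > 0$, so the branch singularity of $k(\tau) = \frac{1-g^2}{4\pi}(1 - 2g\tau + g^2)^{-3/2}$ at $\tau_\star = (1+g^2)/(2g)$ sits at distance $(1-g)^2/(2g)$ from $[-1,1]$. Thus $k$ extends holomorphically to a complex neighborhood of $[-1,1]$, and a Cauchy estimate on a disc of radius $\rho=\rho(g)$ strictly inside that neighborhood yields $|k^{(n)}(\tau)| \leq M n!/\rho^n$ uniformly in $\tau\in[-1,1]$.

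Next I would estimate the derivatives of $F$. The coordinate functions $\tilde{\bfx}\mapsto\tilde x_j/\|\tilde{\bfx}\|$ are real-analytic on $\RR^d\setminus\{0\}$; complexifying $\|\tilde{\bfx}\|^{-1}=(\sum_i\tilde x_i^2)^{-1/2}$ reveals that the only singular locus is the isotropic cone $\{\sum_i z_i^2=0\}\subset\CC^d$, whose Euclidean distance from a real point $\tilde{\bfx}$ equals $\|\tilde{\bfx}\|/\sqrt{2}$. A Cauchy estimate on a polydisc of radius proportional to $\|\tilde{\bfx}\|$ (and symmetrically for $\tilde{\bfy}$) produces
\[
\bigl|\partial_{\tilde{\bfx}}^{\boldsymbol\alpha}\partial_{\tilde{\bfy}}^{\boldsymbol\beta}F(\tilde{\bfx},\tilde{\bfy})\bigr| \leq C_1\,\frac{\boldsymbol\alpha!\,\boldsymbol\beta!}{r_0^{|\boldsymbol\alpha|+|\boldsymbol\beta|}}\,\|\tilde{\bfx}\|^{-|\boldsymbol\alpha|}\,\|\tilde{\bfy}\|^{-|\boldsymbol\beta|}
\]
with universal $C_1,r_0>0$. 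Feeding both estimates into Faà di Bruno's formula for $K=k\circ F$ then gives
\[
\bigl|\partial_{\tilde{\bfx}}^{\boldsymbol\alpha}\partial_{\tilde{\bfy}}^{\boldsymbol\beta}K(\tilde{\bfx},\tilde{\bfy})\bigr| \leq \tilde C\,\frac{(|\boldsymbol\alpha|+|\boldsymbol\beta|)!}{r^{|\boldsymbol\alpha|+|\boldsymbol\beta|}}\,\|\tilde{\bfx}\|^{-|\boldsymbol\alpha|}\,\|\tilde{\bfy}\|^{-|\boldsymbol\beta|}
\]
with $\tilde C,r>0$ depending only on $g$.

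The main obstacle is the final conversion from $\|\tilde{\bfx}\|^{-|\boldsymbol\alpha|}\|\tilde{\bfy}\|^{-|\boldsymbol\beta|}$ to the required $\|\tilde{\bfx}-\tilde{\bfy}\|^{-|\boldsymbol\alpha|-|\boldsymbol\beta|}$ of \cref{def:prel.asSmooth}. This comparison cannot hold with uniform constants on all of $\RR^d\setminus\{0\}$ (for instance, letting $\|\tilde{\bfx}\|\to 0$ while $\tilde{\bfy}$ stays bounded away blows up the left-hand side whereas $\|\tilde{\bfx}-\tilde{\bfy}\|$ remains bounded below), but it does hold uniformly on every subset of $\RR^d\setminus\{0\}$ of bounded diameter bounded away from the origin. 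This is the regime of interest, since the $\mathcal{H}$- and $\mathcal{H}^2$-matrix black-box libraries are applied on a tubular neighborhood of the unit sphere $S$; the resulting geometric factors can then be absorbed into $C$ and $r$.
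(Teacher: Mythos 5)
Your proof is correct and reaches the right conclusion, but it takes a genuinely different route from the paper's. The paper first rewrites $\bfx\cdot\bfy = 1-\|\bfx-\bfy\|^2/2$ and introduces $\tilde k(\xi)=k(1-\xi^2/2)$, observing that $\tilde k$ is holomorphic for $\Re(\xi)>0$ with the scaling bound $|\tilde k^{(\alpha)}(\xi)|\leq c\,r^\alpha\alpha!\,|\xi|^{-\alpha}$ obtained from Cauchy estimates on the half-plane; this reparametrization builds the $\|\bfx-\bfy\|^{-\alpha}$ factor of \cref{def:prel.asSmooth} directly into the one-dimensional estimate, after which the composition with the analytic map $\tilde\bfx\mapsto\tilde\bfx/\|\tilde\bfx\|$ is dispatched by citing the appendix of \cite{HP2013}. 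You instead keep $K=k\circ F$ with $F(\tilde\bfx,\tilde\bfy)=\bfx\cdot\bfy$, use that for $0\leq g<1$ the profile $k$ is analytic in a fixed complex neighborhood of $[-1,1]$ (so $|k^{(n)}|\leq Mn!/\rho^n$ uniformly, without any singular scaling), obtain polydisc Cauchy estimates for the normalization maps with the scalings $\|\tilde\bfx\|^{-|\boldsymbol\alpha|}\|\tilde\bfy\|^{-|\boldsymbol\beta|}$, and combine via Fa\`a di Bruno. Both routes are sound; the paper's reparametrization is slicker in that it matches the target form $\|\tilde\bfx-\tilde\bfy\|^{-|\boldsymbol\alpha|-|\boldsymbol\beta|}$ automatically, whereas yours is more elementary and self-contained but requires the final conversion from $\|\tilde\bfx\|^{-|\boldsymbol\alpha|}\|\tilde\bfy\|^{-|\boldsymbol\beta|}$ to $\|\tilde\bfx-\tilde\bfy\|^{-|\boldsymbol\alpha|-|\boldsymbol\beta|}$ on a bounded set away from the origin. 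Your remark that the estimate cannot hold with constants uniform on all of $\RR^d\setminus\{0\}$ is accurate and in fact identifies a mild imprecision in the lemma statement that the paper glosses over by citation; what is actually used is asymptotic smoothness on a tubular neighborhood of $S$, which is exactly the regime your argument covers.
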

\begin{proof}
We first remark that the cosinus theorem implies for $\bfx,\bfy\in S$ with angle $\varphi$ that $\bfx\cdot\bfy = \cos(\varphi) = 1-\|\bfx-\bfy\|^2/2$. Moreover, $\tilde{k}(\xi) = k(1-\xi^2/2)$ is holomorphic for $\Re(\xi)>0$ such that its Taylor series around $\xi>0$ has convergence radius $\xi$ and the derivatives of $\tilde{k}$ satisfy $\big|\partial_\xi^\alpha\tilde{k}(\xi)\big|\leq cr^\alpha\alpha!|\xi|^{-\alpha}$, $\alpha\in\NN_0$, for all $\xi>0$. Since $\tilde{\bfx}\mapsto\bfx=\tilde{\bfx}/\|\tilde{\bfx}\|$ is analytic for $\tilde{\bfx}\neq 0$ and since $K(\tilde{\bfx},\tilde{\bfy})=\tilde{k}(\|\bfx-\bfy\|)$, the assertion follows in complete analogy to the appendix of \cite{HP2013}.
\end{proof}
The $\mathcal{H}$- or $\mathcal{H}^2$-approximation of $\bSe$ and $\bSo$ and the sparsity of $\bMse_s$ and $\bMso_s$ combined with the tensor product identity \cref{eq:tpmv} then allow for an application of $\bKe$ and $\bKo$ in almost linear or even linear complexity.

\subsection{Choice and implementation of $\bP_1$}\label{sec:pre_p1}
As shown in \cref{sec:iteration_ep}, choosing $\bP_1$ as in \cref{lem:spectral_equivalence} leads to contraction rates $\eta\leq c$ in \eqref{eq:convergence_discrete}, i.e., independent of the mesh-parameters.
The choice $\bP_1=\bE^{-1}$ can be realized through an inner iterative methods, such as a preconditioned Richardson iteration resulting in an inner-outer iteration scheme when employed in \cref{eq:Richardson_discrete}. 
An effective preconditioner for $\bE$ is given by the block-diagonal, symmetric positive definite matrix $\bE_0=\frac{1}{1-cg}\bA^\intercal(\bM^-)^{-1}\bA+\bR+\bM^+$ which provides the spectral estimates
\begin{align}\label{eq:equivalence_bE_bE0}
    (1-cg)\bx^\intercal\bE_0\bx\leq \bx^\intercal\bE\bx\leq \bx^\intercal\bE_0\bx,
\end{align}
for all $\bx\in\RR^{n_S^+n_R^+}$, cf. \cref{lem:spectral_equivalence_inf}. Thus, the condition number of $\bE_0^{-1}\bE$ is bounded by $(1-cg)^{-1}$, which is uniformly bounded for $c\in[0,1]$ for fixed $g<1$.
For clarity of presentation, we will use a preconditioned Richardson iteration for the inner iteration to implement $\bP_1$ in the rest of the paper, but remark that a non-stationary preconditioned conjugate gradient method will lead to even better performance.
Applying $\bP_1$ with high accuracy may still involve many iterations. Instead, we use a preconditioner $\bP_1^l$ which performs $l$ steps of the inner iteration, i.e., we set $\bP_1^l \bb=\bz_l$, where
\begin{align}\label{eq:applyP1}
    \bz_0=0,\qquad \bz_{k+1}=\bz_{k}-\bE_0^{-1}(\bE \bz_k-\bb),\quad k<l. 
\end{align}
Notice that, $\bP_1^1=\bE_0^{-1}$ while $\bP_1^lb \to \bE^{-1}\bb$ as $l\to\infty$.
The next result asserts that this inexact realization of the preconditioner leads to a convergent scheme.
\begin{lemma}\label{lem:preconditioner_pl}
Let $l\geq 1$ be fixed. The iteration \cref{eq:ep_discrete} with preconditioner $\bP_1=\bP_1^l$ defines a convergent sequence, i.e., \cref{eq:convergence_discrete} holds with $\eta\leq c$ and $\eta$ as in \cref{eq:def_eta}.
\end{lemma}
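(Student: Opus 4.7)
The plan is to put $\bP_1^l$ in closed form, verify it is symmetric positive definite and spectrally equivalent to $\bE^{-1}$, and then invoke the discrete analog of \cref{lem:convergence_richardson_ep}. Unrolling the inner recursion \cref{eq:applyP1} from $\bz_0=0$, an easy induction on $k$ yields $\bz_k - \bE^{-1}\bb = -(\bI-\bE_0^{-1}\bE)^k\bE^{-1}\bb$, so
\[
\bP_1^l = \bigl(\bI - (\bI - \bE_0^{-1}\bE)^l\bigr)\bE^{-1}.
\]
Symmetry of $\bP_1^l$ follows from the intertwining identity $\bE(\bI-\bE_0^{-1}\bE)^k = (\bI-\bE\bE_0^{-1})^k\bE$, verified by induction starting from the elementary relation $\bE - \bE\bE_0^{-1}\bE = \bE - \bE\bE_0^{-1}\bE$ together with the symmetry of $\bE$ and $\bE_0$.

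For the spectral equivalence, the symmetric conjugate $\widetilde{\bP}_1^l := \bE^{1/2}\bP_1^l\bE^{1/2}$ can be rewritten as $\bI - (\bI - \widehat{\bT})^l$, where $\widehat{\bT} := \bE^{1/2}\bE_0^{-1}\bE^{1/2}$. By \cref{eq:equivalence_bE_bE0}, $\mathrm{spec}(\widehat{\bT}) \subset [1-cg,1]$, and hence $\mathrm{spec}(\widetilde{\bP}_1^l) \subset [1-(cg)^l,1]$. Equivalently, in Loewner order,
\[
(1-(cg)^l)\,\bE^{-1} \leq \bP_1^l \leq \bE^{-1}.
\]
Combined with the discrete analog of \cref{lem:spectral_equivalence}, $(1-c)\bE \leq \bE - \bKe \leq \bE$, the Rayleigh quotient $\mu = (\bv^\intercal(\bE-\bKe)\bv)/(\bv^\intercal(\bP_1^l)^{-1}\bv)$ shows that the generalized eigenvalues of $(\bE-\bKe)\bv = \mu(\bP_1^l)^{-1}\bv$ lie in $[(1-c)(1-(cg)^l),\,1]$.

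Finally, the iteration matrix in \cref{eq:Richardson_discrete} factors as $(\bI-\bP_G)(\bI-\bP_1^l(\bE-\bKe))$. The complement $\bI-\bP_G$ is non-expansive in the $(\bE-\bKe)$-norm since $\bP_G$ is the $(\bE-\bKe)$-orthogonal projection onto $\mathrm{range}(\bW)$, and the inner factor is self-adjoint in that inner product by symmetry of $\bP_1^l$, so its operator norm equals the spectral radius $\max_\mu|1-\mu|$. The discrete version of \cref{lem:convergence_richardson_ep} then yields $\eta < 1$, giving convergence; the sharp bound $\eta \leq c$ is recovered in the limit $l\to\infty$ (where $\bP_1^l \to \bE^{-1}$ and \cref{lem:spectral_equivalence} case (i) applies directly) and for $g=0$ (where $\bE = \bE_0$, so $\bP_1^l = \bE^{-1}$ already for $l=1$). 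The main obstacle is recovering $\eta \leq c$ uniformly in $l$ and $g$: the direct Rayleigh-quotient estimate gives only $\eta \leq c + (1-c)(cg)^l$, and the residual factor $(1-c)(cg)^l$ is expected to be absorbed by the Galerkin correction $\bP_G$, since the error modes responsible for it are precisely the low-order spherical-harmonic modes singled out in \cref{sec:motivation}.
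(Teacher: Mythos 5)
Your closed-form representation $\bP_1^l = \bigl(\bI - (\bI-\bE_0^{-1}\bE)^l\bigr)\bE^{-1}$, the symmetry argument, and the two-sided bound $(1-(cg)^l)\,\bE^{-1}\le\bP_1^l\le\bE^{-1}$ are all correct, and the closed form is equivalent to the paper's Neumann-series representation $\bP_1^l = \sum_{k=0}^{l-1}(\bE_0^{-1}(\bE_0-\bE))^k\bE_0^{-1}$. However, the argument then hits a genuine gap that you yourself flag: your lower bound is of the form $(1-(cg)^l)\,\bE^{-1}$, which paired with $(1-c)\bE\le\bE-\bKe$ only gives $\beta\ge(1-c)(1-(cg)^l)$, hence $\eta\le c+(1-c)(cg)^l$, not $\eta\le c$. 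Your proposed rescue --- that the residual term is absorbed by the Galerkin projection because the offending modes are low-order spherical harmonics --- is heuristic and unproven; $\bI-\bP_G$ is non-expansive in the $(\bE-\bKe)$-norm, but nothing in the argument forces it to contract precisely those modes, so this does not close the gap.

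The fix is to use a sharper lower bound on $\bP_1^l$, namely $\bP_1^l\ge\bE_0^{-1}$, which is immediate from the Neumann-series form (the $k=0$ term is $\bE_0^{-1}$ and all other summands are positive semidefinite) or, equivalently, from your closed form together with $\mathrm{spec}(\widehat{\bT})\subset[1-cg,1]$ (so $\bI-(\bI-\widehat\bT)^l \ge \widehat\bT$ on that interval, i.e., $\widetilde{\bP}_1^l\ge\widehat{\bT}$). Inverting $\bE_0^{-1}\le\bP_1^l\le\bE^{-1}$ gives $\bE\le(\bP_1^l)^{-1}\le\bE_0$. Now the crucial point is that \cref{lem:spectral_equivalence} applies to \emph{both} $\bE$ and $\bE_0$: the $\bE$-version gives the upper bound $\bx^\intercal(\bE-\bKe)\bx\le\bx^\intercal\bE\bx\le\bx^\intercal(\bP_1^l)^{-1}\bx$, while the $\bE_0$-version gives the lower bound $(1-c)\bx^\intercal(\bP_1^l)^{-1}\bx\le(1-c)\bx^\intercal\bE_0\bx\le\bx^\intercal(\bE-\bKe)\bx$. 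This places the generalized eigenvalues in $[1-c,1]$ uniformly in $l$ and $g$, and the discrete analog of \cref{lem:convergence_richardson_ep} then yields $\eta\le c$ without any appeal to the Galerkin correction. Your use of only the $\bE$-version of \cref{lem:spectral_equivalence} is what forced the weaker estimate.
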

\begin{proof}
Observing that $\bP_1^l = \sum_{k=0}^{l-1} (\bE_0^{-1}(\bE_0-\bE))^k\bE_0^{-1}$ and that each term in the sum is symmetric and positive semi-definite for $k>0$ and positive definite for $k=0$, it follows that $\bP_1^l$ is symmetric positive definite. Using \cref{eq:equivalence_bE_bE0}, we deduce that the sum converges as a Neumann series to $\bE^{-1}$. Hence, it follows that for all $\bx\in\RR^{n_S^+n_R^+}$
\begin{align*}
    \bx^{\!\intercal} \bE_0^{-1}\bx\leq \bx^{\!\intercal} \bP_1^{l}\bx\leq \bx^{\!\intercal} \bE^{-1}\bx,
\end{align*}
which implies that
$\bx^{\!\intercal} \bE\bx \leq\bx^{\!\intercal} (\bP_1^{l})^{-1}\bx \leq \bx^{\!\intercal} \bE_0\bx$
and, in turn,
\begin{align*}
    (1-c)\bx^{\!\intercal} (\bP_1^{l})^{-1}\bx \leq  \bx^{\!\intercal} (\bE-\bK)\bx\leq \bx^{\!\intercal} (\bP_1^{l})^{-1}\bx,
\end{align*}
where we used \cref{lem:spectral_equivalence}.
The assertion follows then as in \cref{sec:iteration_ep}.
\end{proof}
\subsection{Implementation of the subspace correction}\label{sec:subspace_correction}
The optimal subspaces for the correction \cref{eq:ep_corr} are constructed from the eigenfunctions associated with the largest eigenvalues of the generalized eigenproblem \cref{eq:richardson_gevp} as can be seen from the proof of \cref{lem:convergence_richardson_ep} . The iterative computation of these eigenfunctions is, however, computationally expensive. Instead, we employ a different, computationally efficient tensor product construction that employs discrete counterparts of low-order spherical harmonics expansions motivated in \cref{sec:motivation}.
More precisely, the subspace for the correction is defined as $\WW_{h,N}^+=\PP_{0,N}(\ThS)\otimes \PP_1^c(\ThR)$, where $\PP_{0,N}(\ThS)\subset \PP_{0}(\ThS)$ is the space spanned by the eigenfunctions  associated to the $d_N=(N+1)(N+2)/2$ largest eigenvalues of the generalized eigenvalue problem
\[
\bSe \bWSpheree = \bMSpheree\bWSpheree\bLambda^+
\]
for the scattering operator, mimicking \cref{eq:eigS} after discretization. 
Note that $d_N$ with $N$ even is the number of even spherical harmonics of order less than or equal to $N$, and $\PP_{0,N}(\ThS)$ approximates their span. 
Denote $\bWSpheree_N$ the corresponding matrix of coefficient vectors.
The  subspace $\WW^+_{h,N}$ is spanned by the columns of the matrix
$\bWe=\bWSpheree_N\otimes \bIdse$.
At the discrete level, the correction equation \cref{eq:ep_corr}, thus, reads as
\begin{align}\label{eq:corr_discrete}
\big({\bWe}\!^{\intercal}(\bE-\bKe)\bWe\big) \bu_c={\bWe}\!^{\intercal}((\bE-\bKe)\bP_1-\bI)((\bE-\bKe)\bu_n-\bq).
\end{align}
The efficient assembly of the matrix on the left-hand side relies on the tensor product structure of $\bKe$ and the choice of $\bWSpheree_N$ as outlined in the following. A simple and direct representation of the scattering operator on $\WW^+_{h,N}$ is obtained by
\[
{\bWe}\!^{\intercal}\bKe\bWe = \bLambda_N^+\otimes \bMse_s.
\]
Similarly, we have that ${\bWe}\!^{\intercal}\bMe\bWe=\bIdSpheree \otimes\bMse_t$, and the block-diagonal structure of $\bR$ allows to compute ${\bWe}\!^{\intercal}\bR\bWe$, i.e. the $(i,j)$th block-entry is given by
\[
 \sum_{k=1}^{n_S^+} \bRs_k (\bWSpheree_N(k,i)\bWSpheree_N(k,j))
\]
which requires $O(n_S^+(n_R^+)^{(d-1)/d}d_N)$ many multiplications. The efficient assembly of the remaining term ${\bWe}\!^{\intercal}\bA\!^{\intercal}(\bMo-\bKo)^{-1}\bA\bWe$ relies on another eigenvalue decomposition which diagonalizes $\bMo-\bKo$ on the column range of $\bA\bWe$. The arguments are similar to those in \cref{sec:pre_M-K} and we leave the details to the reader.

\subsection{Full algorithm and complexity}\label{sec:complexity}
For the convenience of the reader we provide here the full algorithm of our numerical scheme. To simplify presentation we start with the application of $\bE$ as given in \cref{alg:applyE} and the application of $\bP_1$ as given in \cref{alg:applyP1}. The full preconditioned Richardson iteration \cref{eq:Richardson_discrete} is outlined in \cref{alg:iteration}.
\begin{algorithm}
\caption{Apply $\bE$, given a factorization of $\bSo_N$ as in \cref{eq:lowrankscatteringmatrix}.\label{alg:applyE}}
\begin{algorithmic}[1]
\Function{$\bfy=$Apply$\bE$}{$\bx$}
\State Solve $(\bMo-\bKo)\bz=\bA\bx$ with PCG, preconditioned by $(\bMo-\bKo_N)^{-1}$ as in \cref{eq:MmKprecond}
\State $\by=\bA\!^{\intercal}\bz+\bMe\bx+\bR\bx$
\EndFunction
\end{algorithmic}
\end{algorithm}
\begin{algorithm}
\caption{Apply $\bP_1=\bP_1^l$ as given in \cref{eq:applyP1}.\label{alg:applyP1}}
\begin{algorithmic}[1]
\Function{$\bz=$Apply$\bP_1$}{$\bx$}
\State $\bz=0$
\For{$k=0,1,\ldots,l$}
\State $\bz=\bz-\bE_0^{-1}($\Call{Apply$\bE$}{$\bz$}$-\bx)$
\EndFor
\EndFunction
\end{algorithmic}
\end{algorithm}
\begin{algorithm}
\caption{Solve $\bE \bue = \bKe \bue + \bq$\label{alg:iteration} according to \cref{eq:Richardson_discrete}}
\begin{algorithmic}[1]
\State Compute $\bSe_N=\bMSpheree\bWSpheree_N\bLambda\!^+_N(\bWSpheree_N)^\intercal\bMSpheree$
\State Compute $\bSo_N=\bMSphereo\bWSphereo_N\bLambda\!^-_N(\bWSphereo_N)^\intercal\bMSphereo$
\State
\State Compute $\bE_c={\bWe}\!^{\intercal}(\bE-\bKe)\bWe$ as in \cref{sec:subspace_correction}
\State
\State Choose $\bue_0$
\For{$n=0,1,2,\ldots$}
\State
\State $\br=$\Call{Apply$\bE$}{$\bue_n$}$-\bKe\bue_n-\bq$
\State $\bs=$\Call{Apply$\bP_1$}{$\br$}
\State $\bue_{n+1/2}=\bue_n-\bs$
\Comment{Half-step}
\State
\State $\bq_c={\bWe}\!^{\intercal}\big($\Call{Apply$\bE$}{$\bs$}$-\bKe\bs-\bq-\br\big)$
\State Solve $\bE_c\bue_{n+1/2,c}=\bq_c$
\State $\bue_{n+1}=\bue_{n+1/2}+\bWe\bue_{n+1/2,c}$
\Comment{Subspace correction}
\EndFor
\end{algorithmic}
\end{algorithm}

For the efficient implementation of these algorithms one may exploit that, except for $\bR$, all matrices
provide a tensor product structure, see \cref{eq:bKebKo,eq:bMebMo,eq:AR}, allowing for efficient storage in $\cO(n_S^{\pm}+n_R^{\pm})$ or $\cO(c_{\H}n_S^{\pm}+n_R^{\pm})$ complexity by using their sparsity or their $\mathcal{H}^2$-matrix representation\footnote{The storage requirements of $\bKe$ and $\bKo$ are $\cO(c_{\H}n_S^{\pm}\log(n_S^{\pm})+n_R^{\pm})$ if $\mathcal{H}$-matrices are used instead of $\mathcal{H}^2$-matrices. In practice, $c_{\H}$ may depend on additional implementation dependent parameters, see \cite{Bor2010,Hac2015}, which we neglect here for sake of simplicity.}. Here, $c_{\H}$ is a constant related to the compression pattern of the $\H^2$-matrix. The storage requirements and application of $\bR$ have complexity $\cO(n_S^+(n_R^+)^{(d-1)/d})$. The relation \cref{eq:tpmv} then allows for an efficient application of all matrices occurring in \cref{eq:mixed_discrete} in $\cO(n_S^{\pm}n_R^{\pm})$ or $\cO(c_{\H}n_S^{\pm}n_R^{\pm})$ operations. Since the solution vector itself has size $n_S^+n_R^+$, see also \cref{eq:solutionrepresentation}, and since $3n_S^+=n_S^-$ and $n_R^+\sim n_R^-$, all matrices appearing in \cref{eq:mixed_discrete} can be stored and applied with linear complexity.

In the following we elaborate the algorithmic complexities of \cref{alg:applyE,alg:applyP1,alg:iteration} in more detail.

\subsubsection{Application of $\bE$}
The listing of \cref{alg:applyE} directly indicates that the main effort of applying $\bE$ lies in the preconditioned conjugate gradient method for applying $(\bMo-\bKo)^{-1}$. From \cref{lem:apply_high_order_pre}, we obtain that $(\bMo-\bKo_N)^{-1}(\bMo-\bKo)$ is applicable in $\cO((d_N+c_{\H}) n_S^- n_R^-)$ operations, while its condition number is $(1-(cg)^{N+2})^{-1}$. This implies an iteration count for the application of $(\bMo-\bKo)^{-1}$ proportional to $(1-(cg)^{N+2})^{-1/2}$ for $cg\approx 1$ when using the preconditioned conjugate gradient method with a fixed tolerance. The overall complexity for applying $(\bMo-\bKo)^{-1}$ and, thus, also $\bE$ is then $\cO((d_N+c_{\H}) n_S^- n_R^-/(1-(cg)^{N+2})^{1/2})$. We note that typically $d_N\ll c_{\H}$ for moderate $N$.

\subsubsection{Application of $\bP_1$}
$\bP_1^l$ consists of $l-1$ applications of $\bE$ and $l$ applications of $\bE_0^{-1}$. 
Since $\bE_0$ is block-diagonal with $n_S^+$ sparse blocks of size $n_R^+\times n_R^+$, the application of $\bE_0^{-1}$ can be performed in
$\cO(n_S^+ (n_R^+)^\gamma)$
if the inversion of each block has $\cO((n_R^+)^\gamma)$ complexity.
This amounts to
$\cO(l (d_N+c_{\H}) n_S^+ n_R^+/(1-(cg)^{N+2})^{1/2}+ l n_S^+ (n_R^+)^\gamma)$
complexity for the application of $\bP_1^l$. 
For moderate $N$, the subspace correction amounts to solving an elliptic system that is reminiscent of an order $N$ spherical harmonics approximation, which can be solved efficiently with a conjugate gradient method preconditioned by a V-cycle geometric multigrid with Gauss-Seidel smoother, cf. \cite{ArridgeEggerSchlottbom13}.

Let us also remark that each diagonal block of $\bE_0$ discretizes an anisotropic diffusion problem with a diffusion tensor $\st^{-1}\int_{K_S} s\cdot s^\intercal ds$ for $K_S\in\T_h^S$. The results reported in \cite{Hemker84} indicate that such problems can be treated efficiently by multigrid methods with line smoothing allowing for $\gamma=1$. A full analysis in the present context is out of the scope of this paper, but any method that gives $\gamma=1$ allows to perform one step in the Richardson iteration \cref{eq:Richardson_discrete} in linear complexity in the dimension of the solution vector. Although $\gamma>1$, sparse direct solvers may work well, too,  cf. \cref{tab:benchmark}.

\subsubsection{Overall iteration}
We start our considerations by remarking that the truncated eigendecompositions of the smaller matrices $\bSe$ and $\bSo$ can be obtained by a few iterations of an iterative eigensolver. Once this is achieved, the computation of the reduced matrix $\bE_c$ can be achieved in $O(n_S^+n_R^+d_N)$ operations, see \cref{sec:subspace_correction}. Thus, the offline cost for the construction of the preconditioners are $O(n_S^+n_R^+d_N)$. The discussion on the application of $\bE$ and $\bP_1$ shows that a single iteration of \cref{alg:iteration} can be accomplished in $\cO(l (d_N+c_{\H}) n_S^+ n_R^+/(1-(cg)^{N+2})^{1/2}+ l n_S^+ (n_R^+)^\gamma)$ operations.

Let us remark that in the case $\gamma=1$ each iteration has linear complexity and it can be implemented such that it offers a perfect parallel weak scaling in $n_S^+n_R^+$ as long as the number of processors is bounded by $n_S^+$ and $n_R^+$. To see this, we note that, with $\bR$ being the only exception, we are only relying on matrix-vector products of matrices having tensor-product structure (or sums thereof). Using the identity \cref{eq:tpmv}, it is clear that these operations offer the promised weak scaling when these matrix-matrix products are accelerated by a parallelization over the rows and columns of the middle matrix. The matrix $\bR$ does not directly provide such a structure, but its block diagonal structure, cf.~\cref{eq:AR}, provides possibilities for a perfectly weakly scaling implementation as well.

In summary, each step in \cref{eq:Richardson_discrete} can be executed very efficiently with straight-forward parallelization.
In the next section we show numerically that the number of iterations required to decrease the error below a given threshold is small already for small values of $l$ and $N$.

\section{Numerical realization and examples}\label{sec:numerics}
We present the performance of the proposed iterative schemes using a lattice type problem \cite{Brunner05}, see \cref{fig:checkerboard}. Here, $R=(0,7)\times(0,7)$ and $c= \|\ss/\st\|_\infty \approx 0.999$. 
The coarsest triangulation of the sphere consists of $128$ element, i.e., $n_S^+=64$, and $n_R^+=3\,249$ vertices to discretize the spatial domain. Finer meshes are obtained by uniform refinement; the new grid points for $\T_h^S$ are projected to the sphere. To minimize consistency errors, we use higher-order integration rules for the spherical integrals.
\begin{figure}[h!]
	\includegraphics[width=0.49\textwidth]{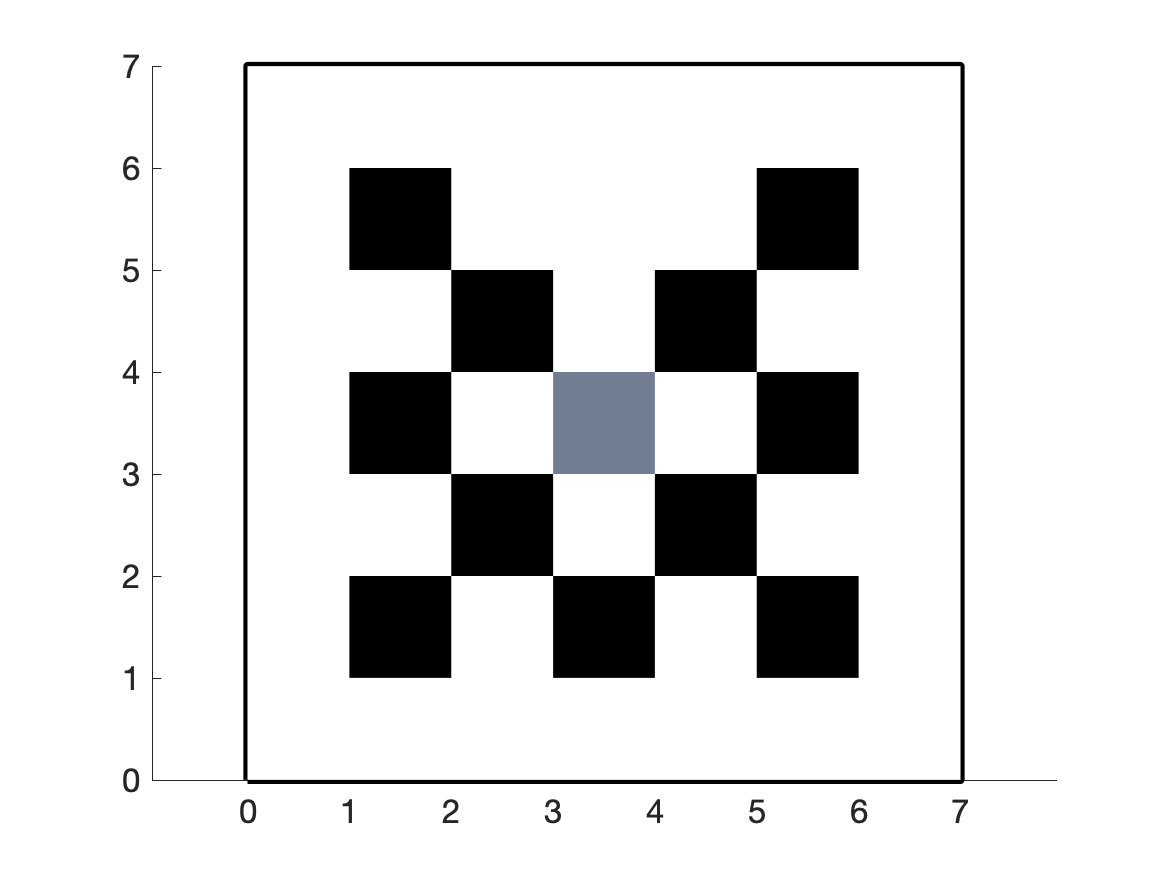}
	\includegraphics[width=0.49\textwidth]{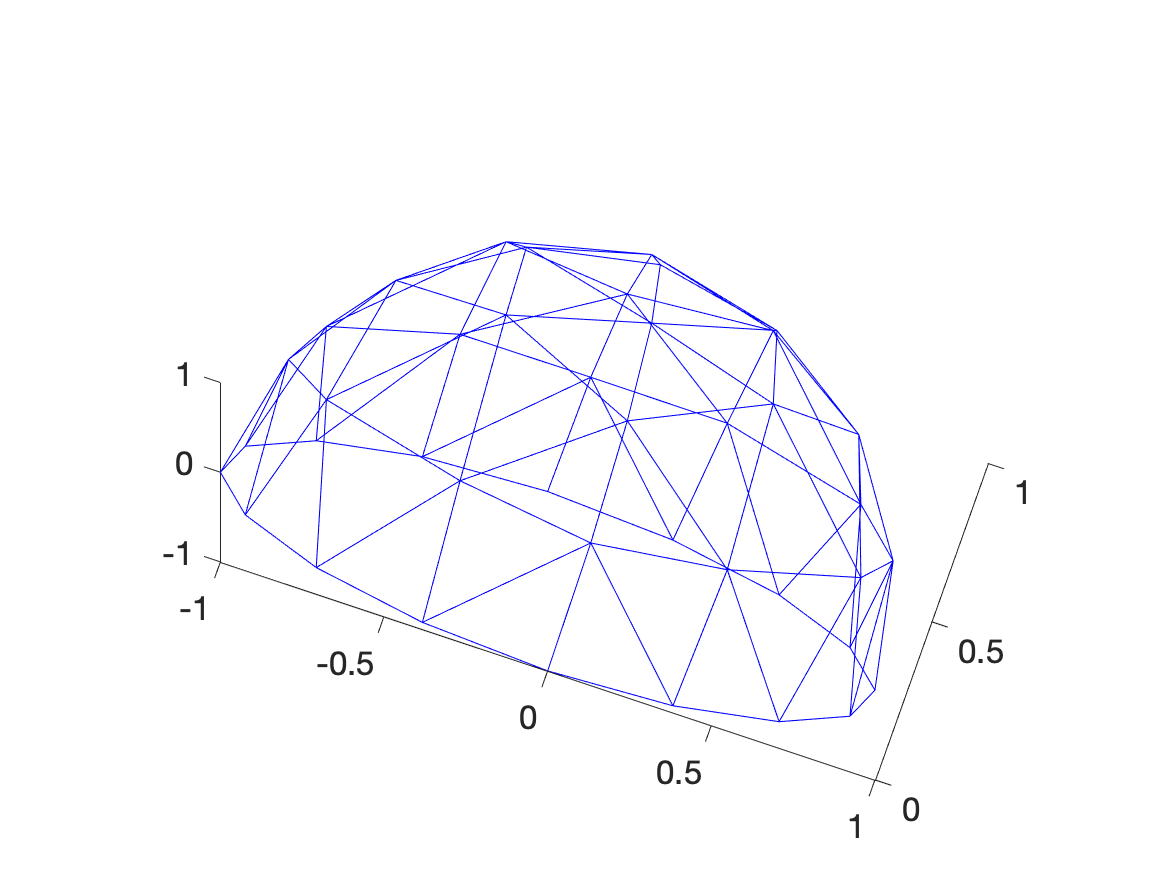}
	\caption{Left: geometry of the lattice problem. The optical parameters are $\ss=10$ and $\sa=0.01$ in the white and grey regions, $\ss=0$ and $\sa=1$ in the black regions and $q=1$ in the grey region and $q=0$ outside the grey region. Right: Sketch of the spherical grid.\label{fig:checkerboard}}
\end{figure}
The timings are performed using an AMD dual EPYC 7742 with 128 cores and with 1024GB memory.

\subsection{Application of $(\M-\K)^{-1}$ }\label{sec:apply_MK_inv}
We show that $(\bMo-\bKo)^{-1}$ can be applied efficiently and robustly in $g$. To that end, we implemented a preconditioned conjugate gradient method with preconditioner $\bMo-\bKo_N$, see \cref{sec:pre_M-K}. 
\cref{tab:apply_MK_inv} shows the required iteration counts to achieve a relative error below $10^{-13}$. For all $g$, the iteration counts decrease with $N$ as predicted by the considerations in \cref{sec:complexity}.
In particular, since $\bKo=\bKo_N=0$, only one iteration is needed for convergence for $g=0$.
Moreover, we see that, although increasing the value of $N$ increases the workload per iteration, the overall solution time can decrease, which is due to the fact that the scattering operator dominates the computational cost for moderate $d_N$, see \cref{sec:complexity}.
In the remainder of the paper, we employ $N=5$, which yields fast convergence for the considered values of $g$.
\begin{table}[ht]
\centering \footnotesize
\setlength{\tabcolsep}{8pt} 
\renewcommand{\arraystretch}{1.2} 
\caption{Iteration counts (timings in sec.) for the application of $(\bMo-\bKo)^{-1}$ using a preconditioned CG method with preconditioner $\bMo-\bKo_N$ and tolerance $10^{-13}$ for $n_S^+=256$ and $n_R^+=12\,769$.\label{tab:apply_MK_inv}}
\begin{tabular}{ r r c c c c c c c c c}
    \toprule
     && \multicolumn{6}{c}{$g$}\\
    	\cmidrule{3-8}	
    $N$ & $d_N$   & 0 & 0.1     & 0.3     & 0.5      & 0.7       & 0.9\\
	\midrule
    -1 &  0 & 1 (1.6) & 4 (4.2) & 6 (6.1) & 8 (8.0)  & 11 (10.7) & 21 (19.9)\\
	1  &  3 & 1 (1.6) & 3 (3.3) & 5 (4.9) & 7 (6.7)  & 10 (9.5)  & 19 (17.4)\\
	3  & 10 & 1 (1.7) & 2 (2.6) & 5 (5.0) & 6 (6.1)  &  8 (7.8)  & 19 (17.6)\\
	5  & 21 & 1 (1.8) & 2 (2.7) & 3 (3.6) & 4 (4.5)  &  7 (7.1)  & 15 (14.2)\\
	7  & 36 & 1 (1.8) & 2 (2.8) & 3 (3.6) & 4 (4.6)  &  6 (6.4)  & 14 (13.4)\\
	9  & 55 & 1 (1.9) & 2 (2.8) & 2 (2.8) & 4 (4.7)  &  6 (6.4)  & 12 (12.1)\\
	\bottomrule
\end{tabular}
\end{table}

\subsection{Convergence rates}
We study the norm $\eta$ of the iteration matrix $(\bIde-\bP_G)(\bIde-\bP_1^l(\bE-\bKe))$ defined in \cref{eq:def_eta} and its spectral radius 
\begin{align*}
    \rho = \max\{|\lambda|:\,\,  \lambda \text{ is an eigenvalue of } (\bIde-\bP_G)(\bIde-\bP_1^l(\bE-\bKe))\}
\end{align*}
for different choices of preconditioners $\bP_1=\bP_1^l$, anisotropy factors $g$ and dimensions $d_N$ chosen for the subspace correction. 
Since $\bP_G$ is a projection, we have that
\begin{align*}
    (\bIde-\bP_G)^\intercal (\bE-\bKe) (\bIde-\bP_G) = (\bE-\bKe)(\bIde-\bP_G).
\end{align*}
Therefore, $\eta^2$ is the largest eigenvalue of the eigenvalue problem
\begin{align*}
    (\bIde-\bP_1^l(\bE-\bKe))(\bIde-\bP_G)(\bIde-\bP_1^l(\bE-\bKe)) \bw=\lambda \bw.
\end{align*}
We use \textsc{Matlab}'s eigs function to compute $\rho$ and $\eta$ with tolerance $10^{-7}$ and maximum iterations set to $300$.

For the isotropic case $g=0$, $\bP_1^l=\bE_0^{-1}=\bE^{-1}$, i.e., $\rho$ and $\eta$ do not depend on $l$.
For $N=0$, \cref{tab:isotropic} shows that the values of $\eta$ and $\rho$ are essentially independent of the discretization parameters, see also \cite{palii2020convergent}. We observed numerically that choosing $N\in\{2,4\}$ improves the values of $\rho$ and $\eta$ only slightly.

\begin{table}[ht]
\centering \footnotesize
\setlength{\tabcolsep}{8pt} 
\renewcommand{\arraystretch}{1.2} 
\caption{Values of $\rho$ and $\eta$ of the iteration matrix for $g=0$ and different angular grids.\label{tab:isotropic}}
\begin{tabular}{c c c c c c}
	\toprule
    $n_S^+$ &  16    & 64    & 256   & 1024  & 4096\\
    \midrule
    $\eta$  & 0.385  & 0.429 & 0.445 & 0.450 & 0.451\\
    $\rho$  & 0.212  & 0.261 & 0.280 & 0.286 & 0.288\\
	\bottomrule
\end{tabular}
\end{table}

In the next experiments, we vary $g$ from $0.1$ to $0.9$ in steps of $0.2$. \cref{tab:g01}--\cref{tab:g09} display the corresponding values of $\rho$ and $\eta$. For these anisotropic cases, the iteration count $l$ for the preconditioner $\bP_1^l$ as well as the number $d_N$ defined in \cref{sec:subspace_correction} play an important role.
For all combinations of $d_N$ and $l$, we observe a convergent behavior with $\eta\leq c<1$, which is in line with \cref{lem:preconditioner_pl}. 
The values of $\rho$ and $\eta$ decrease substantially with increasing $d_N$ which is inline with the motivation of \cref{sec:motivation}, while, for fixed $d_N$ a saturation in $l$ can be observed.
For $d_N$ sufficiently large, it seems that $\rho=\eta=g^{l}$, see, e.g. \cref{tab:g07} for $d_4$ and $1\leq l\leq 4$.
We may conclude that we can achieve very good convergence rates for moderate values of $d_N$ and $l$ if combined appropriately.

\begin{table}[ht]
\centering \footnotesize
\setlength{\tabcolsep}{8pt} 
\renewcommand{\arraystretch}{1.2} 
\caption{Values of $\rho$ and $\eta$ for $g=0.1$ and different values of $d_N$ and $l$ to realize $\bP_1^l$.\label{tab:g01}}
\begin{tabular}{c c c c c c c c c} 
	\toprule
    & \multicolumn{2}{c}{$d_0=1$} &&\multicolumn{2}{c}{$d_2=6$} &&\multicolumn{2}{c}{$d_4=15$}\\
    \cmidrule{2-3}	\cmidrule{5-6}	\cmidrule{8-9}
    $l$ &  $\rho$ &$\eta$ &&$\rho$ &$\eta$ &&$\rho$ &$\eta$\\
	\midrule
	1 &0.298 & 0.432 && 0.156 & 0.247 && 0.117 & 0.161\\
	2 &0.264 & 0.429 && 0.101 & 0.237 && 0.048 & 0.141\\
	3 &0.261 & 0.429 && 0.097 & 0.237 && 0.043 & 0.141\\
	4 &0.261 & 0.429 && 0.097 & 0.237 && 0.042 & 0.141\\
	5 &0.261 & 0.429 && 0.097 & 0.237 && 0.042 & 0.141\\
    6 &0.261 & 0.429 && 0.097 & 0.237 && 0.042 & 0.141\\
	\bottomrule
\end{tabular}
\end{table}

\begin{table}[ht]
\centering \footnotesize
\setlength{\tabcolsep}{8pt} 
\renewcommand{\arraystretch}{1.2} 
\caption{Values of $\rho$ and $\eta$ for $g=0.3$ and different values of $d_N$ and $l$ to realize $\bP_1^l$.\label{tab:g03}}
\begin{tabular}{c c c c c c c c c} 
	\toprule
  & \multicolumn{2}{c}{$d_0=1$} &&\multicolumn{2}{c}{$d_2=6$} &&\multicolumn{2}{c}{$d_4=15$}\\
    \cmidrule{2-3}	\cmidrule{5-6}	\cmidrule{8-9}
    $l$ &  $\rho$ &$\eta$ &&$\rho$ &$\eta$ &&$\rho$ &$\eta$\\
	\midrule
	1 &0.392 & 0.473 && 0.311 & 0.332 && 0.300 & 0.302  \\
	2 &0.299 & 0.448 && 0.146 & 0.246 && 0.106 & 0.157  \\
	3 &0.284 & 0.447 && 0.111 & 0.242 && 0.060 & 0.146  \\
	4 &0.281 & 0.447 && 0.103 & 0.242 && 0.050 & 0.146  \\
	5 &0.280 & 0.447 && 0.101 & 0.242 && 0.047 & 0.146  \\
    6 &0.280 & 0.447 && 0.101 & 0.242 && 0.046 & 0.146  \\
	\bottomrule
\end{tabular}
\end{table}
\begin{table}[ht]
\centering \footnotesize
\setlength{\tabcolsep}{8pt} 
\renewcommand{\arraystretch}{1.2} 
\caption{Values of $\rho$ and $\eta$ for $g=0.5$ and different values of $d_N$ and $l$ to realize $\bP_1^l$.\label{tab:g05}}
\begin{tabular}{c c c c c c c c c} 
	\toprule
  & \multicolumn{2}{c}{$d_0=1$} &&\multicolumn{2}{c}{$d_2=6$} &&\multicolumn{2}{c}{$d_4=15$}\\
    \cmidrule{2-3}	\cmidrule{5-6}	\cmidrule{8-9}
    $l$ &  $\rho$ &$\eta$ &&$\rho$ &$\eta$ &&$\rho$ &$\eta$\\
	\midrule
	1 & 0.522 & 0.553 && 0.499 & 0.499 && 0.499 & 0.499 \\
	2 & 0.386 & 0.489 && 0.265 & 0.301 && 0.250 & 0.255 \\
	3 & 0.361 & 0.482 && 0.174 & 0.260 && 0.136 & 0.175 \\
	4 & 0.358 & 0.480 && 0.147 & 0.254 && 0.089 & 0.159 \\
	5 & 0.357 & 0.480 && 0.140 & 0.253 && 0.070 & 0.156 \\
    6 & 0.357 & 0.480 && 0.137 & 0.253 && 0.062 & 0.156 \\
	\bottomrule
\end{tabular}
\end{table}
\begin{table}[ht]
\centering \footnotesize
\setlength{\tabcolsep}{8pt} 
\renewcommand{\arraystretch}{1.2} 
\caption{Values of $\rho$ and $\eta$ for $g=0.7$ and different values of $d_N$ and $l$ to realize $\bP_1^l$. The symbol $-$ indicates that \textsc{Matlab}'s eigs function has not converged to the desired tolerance.\label{tab:g07}}
\begin{tabular}{c c c c c c c c c} 
	\toprule
  & \multicolumn{2}{c}{$d_0=1$} &&\multicolumn{2}{c}{$d_2=6$} &&\multicolumn{2}{c}{$d_4=15$}\\
    \cmidrule{2-3}	\cmidrule{5-6}	\cmidrule{8-9}
    $l$ &  $\rho$ &$\eta$ &&$\rho$ &$\eta$ &&$\rho$ &$\eta$\\
	\midrule
	1 &  ---  & 0.699 && 0.699 & 0.699 && 0.699 & 0.699\\
	2 & 0.537 & 0.582 && 0.489 & 0.489 && 0.489 & 0.489\\
	3 & 0.515 & 0.567 && 0.349 & 0.366 && 0.342 & 0.342\\
	4 & 0.512 & 0.565 && 0.270 & 0.319 && 0.241 & 0.253\\
	5 & 0.511 & 0.564 && 0.248 & 0.309 && 0.178 & 0.212\\
    6 & 0.511 & 0.564 && 0.239 & 0.306 && 0.142 & 0.195\\
	\bottomrule
\end{tabular}
\end{table}
\begin{table}[ht]
\centering \footnotesize
\setlength{\tabcolsep}{8pt} 
\renewcommand{\arraystretch}{1.2} 
\caption{Values of $\rho$ and $\eta$ for $g=0.9$ and different values of $d_N$ and $l$ to realize $\bP_1^l$. The symbol $-$ indicates that \textsc{Matlab}'s eigs function has not converged to the desired tolerance.\label{tab:g09}}
\begin{tabular}{c c c c c c c c c}
	\toprule
   & \multicolumn{2}{c}{$d_0=1$} &&\multicolumn{2}{c}{$d_2=6$} &&\multicolumn{2}{c}{$d_4=15$}\\
    \cmidrule{2-3}	\cmidrule{5-6}	\cmidrule{8-9}
    $l$ &  $\rho$ &$\eta$ &&$\rho$ &$\eta$ &&$\rho$ &$\eta$\\
	\midrule
	1 & ---  & ---  && ---  &---   && ---&0.899\\
	2 & 0.808&0.808 &&0.808 &0.808 && 0.808&0.808 \\
	3 & 0.764&0.775 &&0.758 &0.758 && 0.727&0.727 \\
	4 & 0.763&0.773 &&0.757 &0.757 && 0.653&0.653 \\
	5 & 0.763&0.772 &&0.757 &0.757 && 0.587&0.587 \\
	6 & 0.763&0.772 &&0.757 &0.757 && 0.528&0.528 \\
	\bottomrule
\end{tabular}
\end{table}

\subsection{$\mathcal{H}^2$-matrix approximation of $\S$}
We demonstrate the $\mathcal{H}^2$-com\-press\-i\-bi\-li\-ty of the scattering operator $\S$. Since every $\mathcal{H}^2$-matrix can be represented as an $\mathcal{H}$-matrix, this also demonstrates the compressibility of $\S$ by means of $\mathcal{H}$-matrices. For the implementation we use a \textsc{Mex} interface to include the library \textsc{H2Lib} \cite{Boe} into our \textsc{Matlab}-implementation.

For the numerical experiments themselves, we choose $g=0.5$ and the same quadrature formula in our \textsc{Matlab} implementation and in our implementation within the \textsc{H2Lib}. The compression algorithm of $\textsc{H2Lib}$ uses multivariate polynomial interpolation, requiring the extension of the Henyey-Greenstein kernel as in \cref{eq:HGextension}. The compression parameters are set to an admissibility parameter
$\eta_{\H}=1.4$,
$p=4$ interpolation points on a single interval and a minimal block size parameter $n_{\min}=64$, see \cite{Bor2010,Hac2015}. We also tested an implementation without the need for an extension within the \textsc{Bembel} library \cite{DHK+2020} which yields similar results, but requires a finite element discretization on quadrilaterals, rather than triangles.
In both cases, the differences between dense and compressed scattering matrix are below the discretization error.

\cref{tab:comparison_compression} lists the memory requirements, setup time, and time for a single matrix-vector multiplication of $\bSe$ in dense and $\mathcal{H}^2$-compressed form. We can clearly observe the quadratic complexity for storage and matrix-vector multiplication of the dense matrices and the asymptotically linear complexity of the $\mathcal{H}^2$-matrices. The scaling of the assembly times for dense and $\mathcal{H}^2$-matrices seems to be worse than predicted by theory, which is possibly caused by memory issues. Nevertheless, the scaling of the $\mathcal{H}^2$-matrices for the assembly times is much better than the one for dense matrices.

\begin{table}[ht]
\centering \footnotesize
\setlength{\tabcolsep}{8pt} 
\renewcommand{\arraystretch}{1.2} 
\caption{Memory consumption in MB, timings in sec.~for assembly and matrix-vector multiplication of $\bSe$ and corresponding $\H^2$-matrix approximation $\overline{\bSe}$ for $g=0.5$. Numbers in brackets indicate the ratio to the previous refinement level.\label{tab:comparison_compression}}
\begin{tabular}{r c c c c c c}
	\toprule
	$n_S^+$ & mem $\bSe$ & setup $\bSe$ & apply $\bSe$\\
	\midrule
	$     64$ & $0.0312$          & $0.171$          & $6.9\cdot 10^{-5}$         \\
	$    256$ & $0.5$ ($16.0$) & $0.203$ ($1.2$) & $6.5\cdot 10^{-5}$ ($0.9$)\\
	$   1\,024$ & $8$ ($16.0$) & $0.438$ ($2.2$) & $0.000313$ ($4.8$)\\
	$   4\,096$ & $128$ ($16.0$) & $4.2$ ($9.6$) & $0.00517$ ($16.5$)\\
	$  16\,384$ & $2.05\cdot 10^3$ ($16.0$) & $189$ ($45.0$) & $0.0805$ ($15.6$)\\
	$  65\,536$ & $3.28\cdot 10^4$ ($16.0$) & $1.09\cdot 10^4$ ($57.5$) & $2.67$ ($33.1$)\\
	$ 262\,144$ &                 --- &                 --- &                 ---\\
	$1\,048\,576$ &                 --- &                 --- &                 ---\\
	\midrule
	$n_S^+$ & mem $\overline{\bSe}$ & setup $\overline{\bSe}$ & apply $\overline {\bSe}$\\
	\midrule
	$     64$ & $0.0313$          & $0.00109$          & $0.00025$         \\
	$    256$ & $0.502$ ($16.0$) & $0.0116$ ($10.7$) & $0.000547$ ($2.2$)\\
	$   1\,024$ & $11.3$ ($22.5$) & $0.139$ ($11.9$) & $0.0086$ ($15.7$)\\
	$   4\,096$ & $89.2$ ($7.9$) & $0.902$ ($6.5$) & $0.0841$ ($9.8$)\\
	$  16\,384$ & $484$ ($5.4$) & $4.75$ ($5.3$) & $0.328$ ($3.9$)\\
	$  65\,536$ & $2.27\cdot 10^3$ ($4.7$) & $24.6$ ($5.2$) & $1.46$ ($4.4$)\\
	$ 262\,144$ & $9.53\cdot 10^3$ ($4.2$) & $182$ ($7.4$) & $6.92$ ($4.7$)\\
	$1\,048\,576$ & $3.82\cdot 10^4$ ($4.0$) & $1.46\cdot 10^3$ ($8.0$) & $28.5$ ($4.1$)\\
	\bottomrule
\end{tabular}
\end{table}

\subsection{Benchmark example}
The viability of the preconditioned Richardson iteration \cref{eq:Richardson_discrete} is shown for some larger computations.
We fix $g=0.5$ and solve the even-parity equations \cref{eq:ep_discrete} for the lattice problem.
We fix $l=4$ steps to realize the preconditioner $\bP_1^l$ and $N=4$, i.e., we use $d_4=15$ eigenfunctions of $\bSe$ for the subspace correction, cf. \cref{sec:subspace_correction}. 
In view of \cref{tab:g05}, we expect a contraction rate $\eta\approx 0.16$. Therefore, in order to achieve an error bound $\|\bue-\bue_{n}\|_{\bE-\bKe}<10^{-8}$, we expect to require $n\approx 10$ iterations.
In our implementation, we choose $\bue_0=0$, and we stop the iteration at index $n$ for which
\begin{align}\label{eq:stop}
 \| \bue_{n} - \bue_{n-1}\|_{\bE-\bKe}<10^{-8} \| \bue_{1}\|_{\bE-\bKe}.
\end{align}
Note that, assuming a contraction rate $\eta=0.16$, Banach's fixed point theorem asserts that the error satisfies $\|\bue-\bue_{n}\|_{\bE-\bKe} \leq 0.2 \| \bue_{n} - \bue_{n-1}\|_{\bE-\bKe}$.
The dimension of the problem on the finest grid is $n_R^+n_S^+=207\,360\,000$, i.e., storing the solution vector requires $1.5$GB of memory. Note that the corresponding dimension of the solution vector to the mixed system is about $1.5\times 10^9$.
Motivated by \cref{tab:comparison_compression} we implement the scattering operators $\bSe$ and $\bSo$ using dense matrices in this example.
The application of $\bE_0^{-1}$ is implemented with \textsc{Matlab}'s sparse LU factorization, i.e., here, $\gamma\leq 1.5$ in the complexity estimates of \cref{sec:complexity}.

\cref{tab:benchmark} displays the iteration counts and timings for different grid refinements.
We observe mesh-independent convergence behavior of the iteration which matches well the theoretical bound $n\approx 10$. 
Furthermore, the computation time scales like $(n_R^+)^{1.3}$ for fixed $n_S^+$.
If $n_S^+$ increases from $1024$ to $4096$, the superlinear growth in computation time can be explained by using dense matrices for $\bSe$ and $\bSo$, which, as shown in \cref{tab:comparison_compression}, can be remedied by using the compressed scattering operators. 

\begin{table}[ht]
\centering \footnotesize
\setlength{\tabcolsep}{8pt} 
\renewcommand{\arraystretch}{1.2} 
\caption{Iteration index $n$ (timings in sec.) such that \cref{eq:stop} holds for the benchmark example.\label{tab:benchmark}}
\begin{tabular}{r c c c } 
	\toprule
   & \multicolumn{3}{c}{$n_R^+$}\\
    \cmidrule{2-4}
    $n_S^+$ &  $3\,249$ & $12\,769$ &$50\,625$\\
	\midrule
	  64 & 8  (50)  & 9 (236)\, & 9 (1\,470)\, \\
	 256 & 9  (114) & 9 (499)\, & 9 (2\,476)\,\\
	1\,024 & 9  (300) & 9 (1\,107) & 10 (6\,580)\,\\
	4\,096 & 9 (1\,017) & 9 (4\,983) & 10 (34\,029)\\
	\bottomrule
\end{tabular}
\end{table}

\section{Conclusions}\label{sec:discussion}
We have presented efficient preconditioned Richardson iterations for anisotropic radiative transfer that are provably convergent and show robust convergence in the optical parameters, which comprises forwarded peaked scattering and heterogeneous absorption and scattering coefficients. 
This has been achieved by employing black-box matrix compression techniques to handle the scattering operator efficiently, and by construction of appropriate preconditioners. In particular, we have shown that, for anisotropic scattering, subspace corrections constructed from low-order spherical harmonics expansions considerably improve the convergence of our iteration.

On the discrete level, our preconditioners can be obtained algebraically from the matrices of any FEM code providing the matrices from the mixed system \cref{eq:mixed_discrete}. We discussed further implementational details and their computational complexity, which, supported by several numerical tests, showed the efficiency of our method. If a solver with linear computational complexity for anisotropic elliptic problems is employed to realize $\bE_0^{-1}$, each single iteration of our scheme has linear computational complexity in the discretization parameters.
Our numerical examples employed low-order polynomials for discretization, but the presented methodology directly applies to high-order polynomial approximations as well.

Finally, let us mention that the saddle-point problem \cref{eq:mixed_discrete_intro} may also be solved using the MINRES algorithm after appropriate multiplication of the second equation by $-1$. In view of the inf-sup theory for \cref{eq:op1}--\cref{eq:op2} given in \cite{egger2012mixed}, block-diagonal preconditioners with blocks $\bE-\bKe$ and $\bMo-\bKo$ lead to robust convergence behavior \cite[Section 5.2]{Wathen_2015}, but the efficient inversion of $\bE-\bKe$ is as difficult as solving the even-parity equations, which has been considered in this paper.

\bibliographystyle{siamplain} 
\bibliography{dsa_anisotropic}
\end{document}